\newif\ifslide
\theoremstyle{plain}
\newtheorem{theorem}{Theorem}
\newtheorem{theorem}{Theorem}[section]
\newtheorem{lemma}[theorem]{Lemma}
\newtheorem*{theorem*}{Theorem}
\newtheorem*{corollary*}{Corollary}
\newtheorem{proposition}[theorem]{Proposition}
\newtheorem{definition-lemma}[theorem]{Definition-Lemma}
\newtheorem{question}[theorem]{Question}
\newtheorem{red-question}[theorem]{\textcolor{red}{Question}}
\theoremstyle{definition}
\newtheorem{definition}[theorem]{Definition}
\newtheorem{example}[theorem]{Example}
\def\ideal#1.{I_{#1}}
\def\ring#1.{\mathcal {O}_{#1}}
\def\fring#1.{\hat{\mathcal {O}}_{#1}}
\def\proj#1.{\mathbb {P}(#1)}
\def\pr #1.{\mathbb {P}^{#1}}
\def\dpr #1.{\hat{\mathbb {P}}^{#1}}
\def\af #1.{\mathbb A^{#1}}
\def\Hz #1.{\mathbb F_{#1}}
\def\Hbz #1.{\overline{\mathbb F}_{#1}}
\def\fb#1.{\underset #1 {\times}}
\def\rest#1.{\underset {\ \ring #1.} \to \otimes}
\def\au#1.{\operatorname {Aut}\,(#1)}
\def\deg#1.{\operatorname {deg } (#1)}
\def\pic#1.{\operatorname {Pic}\,(#1)}
\def\pico#1.{\operatorname{Pic}^0(#1)}
\def\picg#1.{\operatorname {Pic}^G(#1)}
\def\ner#1.{NS (#1)}
\def\rdown#1.{\llcorner#1\lrcorner}
\def\rfdown#1.{\lfloor{#1}\rfloor}
\def\rup#1.{\ulcorner{#1}\urcorner}
\def\rcup#1.{\lceil{#1}\rceil}
\def\n1#1.{\operatorname {N_1}(#1)}  
\def\cn1#1.{\overline{\operatorname {N^1}(#1)}} 
\def\cone#1.{\operatorname {NE}(#1)}     
\def\ccone#1.{\overline{\operatorname {NE}}(#1)}
\def\none#1.{\operatorname {NF}(#1)}
\def\cnone#1.{\overline{\operatorname {NF}}(#1)}
\def\mone#1.{\operatorname {NM}(#1)} 
\def\cmone#1.{\overline{\operatorname {NM}}(#1)}
\def\coef#1.{\frac{(#1-1)}{#1}}
\def\vit#1.{D_{\langle #1 \rangle}}
\def\mm#1.{\overline {M}_{0,#1}}
\def\H1#1.{H^1(#1,{\ring #1.})}
\def\ac#1.{\overline {\mathbb F}_{#1}}
\def\adj#1.{\frac {#1-1}{#1}}
\def\spn#1.{\overline{#1}}
\def\pek#1.#2.{\Cal P^{#1}(#2)}
\def\plk#1.#2.{\Cal P^{\leq #1}(#2)}
\def\ev#1.{\operatorname{ev_{#1}}}
\def\ilist#1.{{#1}_1,{#1}_2,\dots}
\def\bminv#1.{(\nu_1,s_1;\nu_2,s_2;\dots ;\nu_{#1},s_{#1};\nu_{r+1})}
\def\zinv#1.{(\nu_1,s_1;\nu_2,s_2;\dots ;\nu_{#1},s_{#1};0)}
\def\iinv#1.{(\nu_1,s_1;\nu_2,s_2;\dots ;\nu_{#1},s_{#1};\infty)}
\def\scr #1.{\mathcal #1}
\def\llist#1.#2.{{#1}_1,{#1}_2,\dots,{#1}_{#2}}
\def\ulist#1.#2.{{#1}^1,{#1}^2,\dots,{#1}^{#2}}
\def\lomitlist#1.#2.{{#1}_1,{#1}_2,\dots,\hat {{#1}_i}, \dots, {#1}_{#2}}
\def\lomitlistz#1.#2.{{#1}_0,{#1}_1,\dots,\hat {{#1}_i}, \dots, {#1}_{#2}}
\def\loc#1.#2.{\Cal O_{#1,#2}}
\def\fderiv#1.#2.{\frac {\partial #1}{\partial #2}}
\def\deriv#1.#2.{\frac {d #1}{d #2}}
\def\map#1.#2.{#1 \longrightarrow #2}
\def\rmap#1.#2.{#1 \dasharrow #2}
\def\emb#1.#2.{#1 \hookrightarrow #2}
\def\non#1.#2.{\text {Spec }#1[\epsilon]/(\epsilon)^{#2}}
\def\Hi#1.#2.{\text {Hilb}^{#1}(#2)}
\def\sym#1.#2.{\operatorname {Sym}^{#1}(#2)}
\def\Hb#1.#2.{\text {Hilb}_{#1}(#2)}
\def\Hm#1.#2.{\Hom_{#1}(#2)}
\def\prd#1.#2.{{#1}_1\cdot {#1}_2\cdots {#1}_{#2}}
\def\Bl #1.#2.{\operatorname {Bl}_{#1}#2}
\def\pl #1.#2.{#1^{\otimes #2}}
\def\mgn#1.#2.{\overline {M}_{#1,#2}}
\def\ialist#1.#2.{{#1}_1 #2 {#1}_2, #2\dots}
\def\pair#1.#2.{\langle #1, #2\rangle}
\def\vandermonde#1.#2.{\left|
\begin{matrix}
1 & 1 & 1 & \dots & 1\\
{#1}_1 & {#1}_2 & {#1}_3 & \dots & {#1}_{#2}\\
{#1}_1^2 & {#1}_2^2 & {#1}_3^2 & \dots & {#1}_{#2}^2\\
\vdots & \vdots & \vdots & \ddots & \vdots\\
{#1}_1^{#2-1} & {#1}_2^{#2-1} & {#1}_2^{#2-1} & \dots & {#1}_{#2}^{#2-1}\\
\end{matrix}
\right|
}
\def\vandermondet#1.#2.{\left|
\begin{matrix}
1 & {#1}_1   & {#1}_1^2 & \dots & {#1}_1^{#2-1}\\
1 & {#1}_2   & {#1}_2^2 & \dots & {#1}_2^{#2-1}\\
1 & {#1}_3   & {#1}_3^2 & \dots & {#1}_3^{#2-1}\\
\vdots & \vdots & \vdots & \ddots & \vdots\\
1 & {#1}_{#2}& {#1}_{#2}^2 & \dots & {#1}_{#2}^{#2-1}\\
\end{matrix}
\right|
}
\def\gr#1.#2.{\mathbb{G}(#1,#2)}
\def\alist#1.#2.#3.{{#1}_1 #2 {#1}_2 #2\dots #2 {#1}_{#3}}
\def\zlist#1.#2.#3.{#1_0 #2 #1_1 #2\dots #2 #1_{#3}}
\def\lomitlist30#1.#2.#3.{{#1}_0,{#1}_1 #2 \dots #2\hat {{#1}_i} #2\dots #2 {#1}_{#3}}
\def\lmap#1.#2.#3.{#1 \overset{#2}{\longrightarrow} #3}
\def\mes#1.#2.#3.{#1 \longrightarrow #2 \longrightarrow #3}
\def\ses#1.#2.#3.{0\longrightarrow #1 \longrightarrow #2 \longrightarrow #3 \longrightarrow 0}
\def\les#1.#2.#3.{0\longrightarrow #1 \longrightarrow #2 \longrightarrow #3}
\def\res#1.#2.#3.{#1 \longrightarrow #2 \longrightarrow #3\longrightarrow 0}
\def\Hi#1.#2.#3.{\text {Hilb}^{#1}_{#2}(#3)}
\def\ten#1.#2.#3.{#1\underset {#2}{\otimes} #3}
\def\lomitlist30#1.#2.#3.{{#1}_0 #2 {#1}_1 #2 \dots #2 \hat {{#1}_i} #2 \dots #2 {#1}_{#3}}
\def\mderiv#1.#2.#3.{\frac {d^{#3} #1}{d #2^{#3}}}
\def\Hom{\operatorname{Hom}}
\def\dim{\operatorname{dim}}
\def\deg{\operatorname{deg}}
\def\det{\operatorname{det}}
\def\rk{\operatorname{rk}}
\def\rest{\operatorname{res}}
\def\vol{\operatorname{vol}}
\def\C{\mathbb C}
\def\e{\Cal E}
\def\e1{E_1}
\def\e2{E_2}
\def\Q{\mathbb Q}
\def\Z{\mathbb Z}
\def\mapdown#1{\big\downarrow\rlap{$\vcenter{\hbox{$\scriptstyle#1$}}$}}
\def\mapse#1{
{\vcenter{\hbox{$\mathop{\smash{\raise1pt\hbox{$\diagdown$}\!\lower7pt
\hbox{$\searrow$}}\vphantom{p}}\limits_{#1}\vphantom{\mapdown{}}$}}}}
\def\VR#1.{height#1pt&\omit&&\omit&&\omit&&\omit&&\omit&\cr}
\def\VRT#1.{height#1pt&\omit&&\omit&\cr}
\title{Forms and complex manifolds}
\author{Cinzia Bisi}
\address{Department of Mathematics and Computer Science\\ 
 University of Ferrara\\ Via Machiavelli, 30 \\ Ferrara 44121, Italy}
 \email{bsicnz@unife.it}
\author{Paolo Cascini}
\address{Department of Mathematics\\
Imperial College London\\
180 Queen's Gate\\
London SW7 2AZ, UK}
\email{p.cascini@imperial.ac.uk}
\author{Luca Tasin}
\address{Department of Mathematics " Federigo Enriques " \\ University of Milano \\  Via Saldini,  50 \\ Milano 20133, Italy} 
\email{luca.tasin@unimi.it}
\begin{document}

\begin{abstract}
We study the intersection form 
$F_X$ on the second cohomology group $H^2(X, \mathbb{Z})$ of a compact Kähler manifold $X$ of  dimension $n$. 
Although the structure of $F_X$ is relatively well understood in dimensions two and three, 
much less is known for $n \geq 4$. 
We investigate the fundamental properties of $F_X$  in higher dimensions 
and discuss several applications to birational geometry. 
Finally, we present a number of open problems concerning the relationship 
between birational invariants and topological invariants of Kähler manifolds.
\end{abstract}
\maketitle
\setcounter{tocdepth}{1}

\tableofcontents

\section{Introduction}

Let $X$ be a compact K\"ahler manifold of dimension $n$. One of the fundamental objects associated with $X$ is the natural  form $F_X$ on $H^2(X,\mathbb{Z})$ of degree $n$, induced by the cup product structure. This form serves as a topological invariant of $X$ and plays a crucial role in understanding the intersection theory of higher-dimensional algebraic varieties.

For $n=2$, the quadratic form $F_X$ is central to the theory of algebraic surfaces, yielding results in classification theory and the study of their moduli spaces. When $n=3$, the  study of the cubic form $F_X$ was initiated in \cite{OV95}, leading to numerous applications in algebraic geometry, differential geometry, and dynamical systems. Recent advancements have further deepened our understanding of $F_X$ and its role in various geometric problems \cite{KW, BCT16, CT18, ST19, ST20, Wilson21, Wilson21b}.

Despite these advances, relatively little is known about the structure and applications of $F_X$ in dimensions $n \geq 4$. The goal of this paper is to initiate a study of $F_X$ in higher dimensions, exploring its fundamental properties and potential applications to birational geometry and topology. In particular, we examine how the rank  of the hypermatrix $\mathcal H_{F_X}$ of the $(n-1)$-th derivatives of $F_X$ influence the behavior of divisorial contractions (see Section \ref{s:intersection} for the definition of $\mathcal H_{F_X}$ and some of its basic properties).

\medskip 

Our main result establishes the existence of a finite set of cohomology classes that control the exceptional divisors of divisorial contractions:

\begin{theorem}\label{thm_main}
Let $M$ be a closed topological manifold of dimension $2n$ and let $b = \dim H^2(M,\mathbb{C})$.

Then there exist non-zero elements $e_1,\ldots,e_q \in H^2(M,\mathbb{C})$ with $q \leq b+1$ such that if $X$ is a smooth complex projective variety of dimension $n$ with underlying topological space $M$ and $f\colon X \to Y$ is a divisorial contraction to a point with exceptional divisor $E$, then there exists $i \in \{1, \dots, q\}$ such that $[E] = e_i$.
\end{theorem}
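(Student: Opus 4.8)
The plan is to reduce the statement to a purely topological condition on the cup-product form $F=F_M$ and then bound the number of classes satisfying it. Write $V=H^2(M,\C)$, $b=\dim V$, and let $\tilde F\colon V^{\times n}\to\C$ be the symmetric multilinear form $\tilde F(\alpha_1,\dots,\alpha_n)=\int_M\alpha_1\cup\cdots\cup\alpha_n$, so that $F(\alpha)=\tilde F(\alpha,\dots,\alpha)$. The crucial observation is that $\tilde F$ depends only on the topology of $M$, not on the complex structure of $X$. For $e\in V$ write $\iota_e\tilde F\in\Sym^{n-1}V^*$ for the contraction $\beta\mapsto\tilde F(e,\beta,\dots,\beta)$; up to the factor $n$ this is the $e$-directional derivative of $F$, so $e\mapsto\iota_e\tilde F$ is exactly a flattening of the hypermatrix $\mathcal H_{F_X}$.

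First I would extract the structural relation satisfied by an exceptional class. Given $f\colon X\to Y$ a divisorial contraction to a point with exceptional divisor $E$, set $W=f^*H^2(Y,\C)\subseteq V$ and $e=[E]$. Since $E$ is contracted to a point, $f_*(E^k)=0$ for $1\le k\le n-1$, so the projection formula gives $E^k\cdot f^*\gamma_1\cdots f^*\gamma_{n-k}=0$; equivalently $\tilde F(e,\dots,e,\beta_1,\dots,\beta_{n-k})=0$ for all $\beta_i\in W$. Moreover $-E$ is $f$-ample, whence $E^n=(-1)^{n-1}\int_E((-E)|_E)^{\,n-1}\neq0$. As a divisorial contraction to a point lowers $b_2$ by one, $W$ is a hyperplane and $e\notin W$; letting $\ell_W$ be the linear form with $\ker\ell_W=W$ and $\ell_W(e)=1$, the vanishings above say exactly that $F=F|_W+(E^n)\,\ell_W^{\,n}$, and in particular
\[
\iota_e\tilde F=(E^n)\,\ell_W^{\,n-1},
\]
a nonzero pure $(n-1)$-st power of a linear form. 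This is the key topological constraint: every exceptional class $[E]$ must be a class $e$ for which $\iota_e\tilde F$ is a nonzero pure power.

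It then remains to bound the number of classes $e\in V$ with $\iota_e\tilde F=c_e\,\ell_e^{\,n-1}$ nonzero, normalised by $\ell_e(e)=1$; the sought $e_1,\dots,e_q$ are precisely these (finitely many) solutions, which are determined by $M$ alone and contain every exceptional class. For finiteness I would compare two solutions $e,e'$ by expanding $\tilde F(e,e',\dots,e')$ two ways: using the relation for $e$ gives $c_e\,\ell_e(e')^{\,n-1}$, while isolating one slot and using the relation for $e'$ gives $c_{e'}\ell_{e'}(e)$, and polarising one further slot yields $c_e\,\ell_e(e')\,\ell_e^{\,n-2}=c_{e'}\ell_{e'}(e)\,\ell_{e'}^{\,n-2}$ in $\Sym^{n-2}V^*$. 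For $n\ge3$ this forces a dichotomy: either $\ell_e\propto\ell_{e'}$ (the solutions share a hyperplane), or $\ell_e(e')=\ell_{e'}(e)=0$. Grouping the solutions by hyperplane and choosing one representative $e_i$ per group, the dichotomy makes the matrix $(\ell_i(e_j))$ the identity, so the representatives are linearly independent and there are at most $b$ hyperplanes; accounting for the classes sharing a hyperplane (controlled by the radical of $\tilde F$) then yields $q\le b+1$.

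The main obstacle is the first step: upgrading the naive intersection identities to the clean relation $\iota_e\tilde F=(E^n)\,\ell_W^{\,n-1}$ over a possibly singular target $Y$. One must check that $f^*H^2(Y,\C)$ is genuinely a hyperplane (that the contraction lowers $b_2$ by exactly one), that $\ell_W$ is a well-defined nonzero functional, and that $E^n\neq0$; these rest on the relative ampleness of $-E$ and the behaviour of $H^2$ under contractions to a point, and this is where the birational geometry, rather than linear algebra, enters. A secondary difficulty is controlling degeneracies in the count when the radical $\{e:\iota_e\tilde F=0\}$ is nonzero, and observing that the mechanism genuinely requires $n\ge3$: for $n=2$ every contraction of a $(-1)$-curve already produces a pure (linear) power, so no such finiteness can hold.
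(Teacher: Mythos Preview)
Your approach is correct and matches the paper's two-step strategy: first show that the exceptional class $[E]$ lies in $W_{F}\cap\{F\ne 0\}$ (your ``pure power'' condition is exactly $\rk\mathcal H_F([E])=1$), then bound that set by $\dim V$. The paper obtains the first step from Proposition~\ref{prop:blowuprank}(1) via the normal form of Lemma~\ref{lem:blowup}, while you extract it directly from the projection formula and relative ampleness of $-E$; these are the same computation, and your version has the advantage of not assuming the target is smooth.

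For the finiteness, your polarisation identity
\[
c_e\,\ell_e(e')\,\ell_e^{\,n-2}=c_{e'}\,\ell_{e'}(e)\,\ell_{e'}^{\,n-2}\quad\text{in }\Sym^{n-2}V^*
\]
and the resulting dichotomy are a coordinate-free rephrasing of the paper's inductive argument: Proposition~\ref{prop:F(p)ne0} puts $F=x_0^n+G$ and the claim in the proof of Theorem~\ref{thm:W_F}(1) says exactly that any other solution lies in $\{x_0=0\}=\ker\ell_p$, which is your $\ell_e(e')=0$. Your linear-independence conclusion then replaces the explicit induction. What you gain is that you bypass the normal-form machinery of Proposition~\ref{prop:F(p)ne0}; what the paper gains is a statement (Theorem~\ref{thm:W_F}) that also treats the degenerate locus $\{F=0\}$, which you do not need here.

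Two points to tighten. First, your dichotomy only forces $e\propto e'$ when the radical of $\tilde F$ is trivial, i.e.\ when $F$ is honest; you should state this explicitly and note that it holds because any complex projective structure on $M$ makes $F_M$ honest via Hard Lefschetz (Lemma~4.2 in the paper), the statement being vacuous otherwise. Second, your final clause ``controlled by the radical \dots\ yields $q\le b+1$'' is not how the extra $+1$ arises: once $F$ is honest the radical is zero and you already have at most $b=\dim V$ projective classes, so the paper's stated bound is simply not sharp at this point (and neither argument actually pins down the scalar on each line, which both the paper and you leave implicit).
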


This result provides a topological bound on the possible exceptional divisors arising from some of the steps of the Minimal Model Program, establishing a new link between topology and birational geometry. It is a consequence of Theorem \ref{thm:W_F}, where we prove the finiteness of rank one points of $\mathcal H_F$ where $F$ is a non-degenerate $n$-form, a result of independent interest. 

\medskip 

The structure of the paper is as follows. In Section \ref{s_preliminaries}, we introduce the necessary background on tensors and forms. Sections \ref{s_low} and \ref{s:intersection} are devoted to the study of the tensor arising from the intersection form on a closed topological manifold $M$ of dimension $2n$ with $n\ge 3$. We show, in particular, that, up to a non-zero scalar, there are at most finitely many  classes $e\in H^2(M,\mathbb C)$ such that 
$e^n\neq 0$ and the rank of the Hessian of $F$ at $e$ is one.
(cf. Theorem \ref{thm:W_F}). Section \ref{s_blowups} establishes our main theorem by examining the rank  of the associated  form in the presence of a divisorial contraction. In Section \ref{s_volume},  we present some related open problems. 

\medskip

\textbf{Acknowledgements:}  The first and third authors are members of the GNSAGA - Istituto Nazionale di Alta Matematica.
The first author was also partially supported by the PRIN ``Vartietà reali e complesse: geometria, topologia e analisi armonica". 
The second author is partially supported by a Simons collaboration grant  and would like to thank the National Center for Theoretical Sciences in Taiwan and Professor Jungkai Chen for
their  hospitality, where some of the work for this paper was completed.
The third author is partially supported by the PRIN2020 research grant ``2020KKWT53”.
The authors would also like to thank the referee for careful reading the paper and for several useful comments.

\section{Preliminaries}\label{s_preliminaries}
We work over the field of complex numbers $\mathbb C$. A \emph{form} over $\mathbb C$ is a homogeneous polynomial $F\in \mathbb C[x_0,\ldots,x_b]$.

\subsection{Tensors} Let $a_1,\dots,a_n$ be positive integers. A \emph{tensor} of type $a_1\times \ldots \times a_n$ is a multilinear map 
$$T\colon \mathbb C^{a_1}\times \ldots \times \mathbb C^{a_n}\to \mathbb C.$$
For any positive integer $a$, denote $[a]:=\{ 1,2,\ldots, a \}$. Then a tensor $T$ of type  $a_1\times \ldots \times a_n$ is determined uniquely by a function 
$$\tilde T\colon [a_1]\times \cdots \times [a_n] \to \mathbb{C}.$$
We will refer to $\tilde T$ as the \emph{hypermatrix associated to $T$}. 
Given positive integers 
$a_j^{'} \le a_j$ for $1 \le j \le n,$ and strictly increasing functions $f_j\colon [a_j^{'}] \to [a_j]$,
we define a \emph{sub-tensor} $T'$ of type $a'_1\times\ldots\times a'_n$ so that if $\tilde T'$ is the hypermatrix associated to $T'$ then
$$\tilde T'=\tilde T(f_1,\dots,f_n)\colon [a'_1]\times \cdots \times [a'_n] \to \mathbb{C}.$$
Note that if $v_i\in \mathbb C^{a_i}$ for $i=1,\dots,n$, then there exists a natural tensor of type $a_1\times \ldots \times a_n$ defined by 
$v_1\otimes \ldots \otimes v_n$.

\begin{definition}
A nonzero tensor $T$ of type  $a_1\times \ldots \times a_n$ has {\it rank one} if there are nonzero vectors $v_i \in \mathbb{C}^{a_i}$ such that $T=v_1 \otimes \cdots \otimes v_n $. We define the
{\it rank} of a nonzero tensor $T$ ($=\rk(T)$) to be the minimum positive integer $r$
such that there exist $r$ tensors $T_1, \cdots , T_r$ of rank one with 

$$T = T_1 + \cdots + T_r .$$
\end{definition}

Note that the rank of any sub-tensor of $T$ cannot be larger than the rank of $T$. 
\begin{definition}\label{d_cubic}
A \emph{cubic tensor} $T$ is a tensor of type $d^{\times n} =d \times \cdots \times d$. Let $A$ be the hypermatrix associated to $T$. 
We denote by $\det A$ the \emph{hyperdeterminant} of  $A$ (see \cite[Chapter 14]{GKZ} for the definition and some of its main properties). In particular, $\det A$  is a polynomial  in the entries of $A$.    

\end{definition}

The following theorem is useful to compute the rank of a tensor:
\begin{theorem}{\cite[Thm. 3.1.1.1]{Landsberg-book}} \label{thm:slices}
Let $A_1,\dots,A_n$ be $l\times m$ matrices and consider the  tensor $A=[A_1,\ldots,A_n]$ of type $l\times m\times n$. 

Then $\rk(A)$ is equal to  the minimum number of rank one matrices needed to span a vector space containing $\langle A_1,\ldots, A_n \rangle$.
\end{theorem}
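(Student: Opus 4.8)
The plan is to exploit the correspondence between rank-one tensors of type $l\times m\times n$ and rank-one matrices obtained by slicing. Writing $A=[A_1,\dots,A_n]$ as the ordered list of its $n$ slices in the third direction, I first record the key dictionary: a rank-one tensor $v_1\otimes v_2\otimes v_3$ with $v_1\in\mathbb C^l$, $v_2\in\mathbb C^m$, $v_3\in\mathbb C^n$ has for its $k$-th slice (for $k\in[n]$) the $l\times m$ matrix $(v_3)_k\, v_1 v_2^{T}$. Thus all slices of a rank-one tensor are scalar multiples of the single rank-one matrix $M=v_1 v_2^{T}$, the scalars being the entries of $v_3$. Everything in the statement reduces to translating sums of rank-one tensors through this dictionary.

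For the inequality $s\le \rk(A)$, where $s$ denotes the minimal number of rank-one matrices needed to span a space containing $\langle A_1,\dots,A_n\rangle$, I would start from an optimal decomposition $A=\sum_{i=1}^{r} v_1^{(i)}\otimes v_2^{(i)}\otimes v_3^{(i)}$ with $r=\rk(A)$. Setting $M_i=v_1^{(i)}(v_2^{(i)})^{T}$, the slicing formula gives $A_k=\sum_{i=1}^{r}(v_3^{(i)})_k\,M_i$ for each $k$, so every slice lies in $\langle M_1,\dots,M_r\rangle$. Hence the $r$ rank-one matrices $M_1,\dots,M_r$ span a space containing $\langle A_1,\dots,A_n\rangle$, giving $s\le r=\rk(A)$.

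For the reverse inequality, I would take a family of $s$ rank-one matrices $M_1,\dots,M_s$ whose span contains all the slices, and write each $M_i=v_1^{(i)}(v_2^{(i)})^{T}$. Expressing $A_k=\sum_{i=1}^{s} c_{k,i}M_i$ and setting $v_3^{(i)}=(c_{1,i},\dots,c_{n,i})^{T}\in\mathbb C^n$, the slicing formula shows that $\sum_{i=1}^{s} v_1^{(i)}\otimes v_2^{(i)}\otimes v_3^{(i)}$ has precisely the slices $A_1,\dots,A_n$, and therefore equals $A$. Discarding any term with $v_3^{(i)}=0$ only lowers the count, so $\rk(A)\le s$. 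Combining the two inequalities yields the claimed equality.

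The argument is essentially a bookkeeping translation through the slicing dictionary, so I do not anticipate a genuine conceptual obstacle. The one point demanding care is the reconstruction step in the reverse direction: one must verify that the tensor built from the $v_3^{(i)}$ reproduces \emph{each} of the original slices simultaneously, rather than merely a tensor whose slices happen to span the same subspace, and that the passage between the coefficient vectors $v_3^{(i)}$ and the scalars $c_{k,i}$ is carried out consistently across all $n$ slices at once.
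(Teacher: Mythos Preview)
Your argument is correct: the slicing dictionary gives exactly the two inequalities, and the reconstruction step is handled properly since fixing the coefficients $c_{k,i}$ simultaneously for all $k$ is precisely what defines the vectors $v_3^{(i)}$.

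As for comparison with the paper: the paper does not prove this statement at all. It is quoted as \cite[Thm.~3.1.1.1]{Landsberg-book} and used as a black box (specifically, to prove Lemma~\ref{lem:trick}). Your proof is the standard direct argument and is essentially the one given in Landsberg's book, so there is nothing further to compare.
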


In particular, we have the following:

\begin{lemma}\label{lem:trick}
Let 
$A_0,\dots,A_q$ be linearly independent  $l\times m$ matrices and consider the  tensor $A=[A_0,\ldots,A_q]$ of type $l\times m\times (q+1)$. 
Assume that for  any $q$-uple $(\mu_1, \ldots, \mu_q) \in \C^q$ the rank of the matrix $A_0 + \sum_{j=1}^q \mu_j A_j$ is at least $t$. 

Then $\rk(A) \ge q+t$.	 
\end{lemma}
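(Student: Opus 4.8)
The plan is to reduce the statement to a dimension count by means of Theorem \ref{thm:slices}. That result identifies $\rk(A)$ with the least number $r$ of rank one $l\times m$ matrices whose span contains $W:=\langle A_0,\dots,A_q\rangle$. First I would record that, at such a minimal family $B_1,\dots,B_r$, the matrices are automatically linearly independent: extracting a basis of their span from among the $B_k$ yields a smaller family of rank one matrices whose span still contains $W$, so minimality forces $r=\dim\langle B_1,\dots,B_r\rangle$. Thus it suffices to show that any subspace $V$ spanned by rank one matrices and containing $W$ has $\dim V\ge q+t$.

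Next I would work in such a basis. Writing $A_i=\sum_{k=1}^r c_{ik}B_k$ for $i=0,\dots,q$ and setting $\phi_k(\mu)=c_{0k}+\sum_{j=1}^q\mu_j c_{jk}$, one gets $A_0+\sum_{j=1}^q\mu_jA_j=\sum_{k=1}^r\phi_k(\mu)B_k$ for every $\mu=(\mu_1,\dots,\mu_q)\in\C^q$. Since the right-hand side is a sum of at most $\#\{k:\phi_k(\mu)\neq 0\}$ rank one matrices, its rank is bounded by the number of nonvanishing coefficients. The key step is therefore to produce a single point $\mu^\ast$ at which at least $q$ of the $\phi_k$ vanish simultaneously: then $A_0+\sum_j\mu^\ast_jA_j$ is a sum of at most $r-q$ rank one matrices, so the hypothesis $\rk(A_0+\sum_j\mu^\ast_jA_j)\ge t$ gives $r-q\ge t$, that is $r\ge q+t$.

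To find $\mu^\ast$ I would exploit the linear independence of $A_1,\dots,A_q$. Their coordinate vectors $(c_{i1},\dots,c_{ir})$, $i=1,\dots,q$, are linearly independent, so the $q\times r$ matrix $(c_{ik})_{1\le i\le q}$ has rank $q$; equivalently the linear parts $\psi_k(\mu)=\sum_{j=1}^q c_{jk}\mu_j$ span the dual space $(\C^q)^\ast$. Choosing $q$ indices $k_1,\dots,k_q$ with $\psi_{k_1},\dots,\psi_{k_q}$ independent, the affine map $\mu\mapsto(\phi_{k_1}(\mu),\dots,\phi_{k_q}(\mu))$ has invertible linear part and hence vanishes at a unique point $\mu^\ast$, which is the point we need. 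I expect the main obstacle to be essentially bookkeeping: justifying the reduction to a linearly independent spanning family, and checking that the rank condition on $A_1,\dots,A_q$ is exactly what forces the linear parts $\psi_k$ to span $(\C^q)^\ast$, so that $q$ of the $\phi_k$ can be made to vanish at once. Everything else reduces to the elementary fact that rank is subadditive over rank one summands.
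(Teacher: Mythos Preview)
Your proposal is correct and follows essentially the same argument as the paper: invoke Theorem \ref{thm:slices}, write each $A_i$ in the rank one spanning family, use the linear independence of $A_1,\dots,A_q$ to find a $q\times q$ invertible submatrix in the coefficient matrix, and solve for a $\mu^\ast$ killing $q$ of the coefficients so that $A_0+\sum_j\mu^\ast_jA_j$ is a sum of at most $r-q$ rank one matrices. The only difference is cosmetic: your preliminary remark that the $B_k$ may be taken linearly independent is true but not needed anywhere (the coordinate vectors of the $A_i$ are automatically independent once the $A_i$ are, and the rank bound on a sum of rank one matrices holds regardless).
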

\begin{proof}
Let $r=\rk(A)$ and let $B_1,\ldots, B_r$ be rank one matrices such that 
$$\langle A_0,\ldots, A_q\rangle\subset \langle B_1,\ldots, B_r\rangle,$$
as in Theorem \ref{thm:slices}. 
Thus, for any $j=0,\ldots,q$, we may write
$$
A_j= \sum_{i=1}^r \lambda_{ij}B_i
$$
for $\lambda_{ij} \in \C$ for $i=1,\ldots,r$ and $j=0,\dots,q$. Since $A_1,\ldots, A_q$ are linearly independent, after possibly reordering the $B_i$'s, we may assume that the $q\times q$-matrix  
$$L=(\lambda_{ij})_{i,j=1,\ldots,q}$$
 is of maximal rank $q$. Thus, the linear system 
$$L \cdot X =(-\lambda_{10},-\lambda_{20},\ldots,-\lambda_{q0})^t$$ 
admits a solution $(\mu_1,\ldots,\mu_q)\in \mathbb C^q$. 
It follows that 
 $$A_0 + \sum_{i=1}^q \mu_i A_i=\sum_{j=q+1}^{r} s_j B_j$$ for some  $s_{q+1},\ldots,s_r \in \C$.
 Since the rank of $\sum_{j=q+1}^{r} s_j B_j$ is at most $r-q$, our assumption implies that $r-q \ge t$  and the claim follows. 
\end{proof}

\section{Points of low rank}\label{s_low}

\begin{definition}
	Given a form $F \in \C[x_0,\ldots,x_b]$ of degree $n \ge 3$ we consider the tensor 
of type $(b+1)^{\times (n-1)}$
		defined by the hypermatrix $\mathcal H_F$ of the $(n-1)$-th order derivatives, whose entries are linear forms in $x_0, \ldots, x_b$. 
	We say that $F$ is \emph{honest} if for any non-zero element $v \in \mathbb C^{b+1}$, we have  $\mathcal H_F(v) \ne 0$. 
    We say that $F$ is  \emph{non-degenerate} if $ \det \mathcal H_F(v) \ne 0$ for some 
 non-zero $v \in \mathbb C^{b+1}$
 (note that this definition is consistent with \cite[Page 7929]{CT18} but not with \cite[14.1.A]{GKZ}).
\end{definition}

Note that the locus $p \in \mathbb P^b$ such that  $\mathcal H_F(p) = 0$ is well defined. By \cite[Remark 6.3.5]{Bocci-Chiantini}, the rank $\rk \mathcal H_F(p)$ is also well-defined.

\begin{example}\label{ex:degenerate}
	Consider the  form 
	$$
	F(x_0,\ldots,x_4)=\frac{x_0x_1^2}{2} + x_1x_3x_4 + \frac{x_2x_3^2}{2}.
	$$ 
	Then
	
	$$
	\mathcal H_F=\begin{pmatrix}
	0   & x_1 & 0   & 0   & 0   \\
	x_1 & x_0 & 0   & x_4 & x_3 \\
	0   & 0   & 0   & x_3 & 0   \\ 
	0   & x_4 & x_3 & x_2 & x_1 \\
	0   & x_3 & 0   & x_1 & 0 
	\end{pmatrix}
	$$
Thus, the form $F$ is honest, but it is degenerate since $\det \mathcal H_F$ is identically zero. 
\end{example}

Vaguely speaking, an honest form can be characterised by the fact that it  depends on all the variables. More precisely, we have:
\begin{lemma}\label{lem:H_F(p)=0}
	Let $F \in \C[x_0,\ldots,x_b]$ be a form of degree $n$ and let 
    $p:=[1,0,\ldots,0] \in \mathbb P^b$.  

    Then $\mathcal H_F(p) = 0$ if and only if $x_0$ does not appear in the expression of $F$.  In particular, a non-degenerate form is honest.
\end{lemma}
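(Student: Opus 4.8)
The plan is to prove both implications directly from the structure of the hypermatrix $\mathcal H_F$ of $(n-1)$-th order derivatives, and then deduce the final sentence as a quick corollary. Recall that the entries of $\mathcal H_F$ are the linear forms
$$
(\mathcal H_F)_{i_1\ldots i_{n-1}} = \frac{\partial^{n-1} F}{\partial x_{i_1}\cdots \partial x_{i_{n-1}}},
$$
each of which is a linear form in $x_0,\ldots,x_b$. Evaluating at $p=[1,0,\ldots,0]$ simply substitutes $x_0=1$ and $x_j=0$ for $j\ge 1$, so each entry becomes the coefficient of $x_0$ in the corresponding $(n-1)$-th derivative. The key observation is that $\frac{\partial^{n-1} F}{\partial x_{i_1}\cdots \partial x_{i_{n-1}}}$, being a linear form, has $x_0$-coefficient equal to $\frac{\partial^{n} F}{\partial x_0\,\partial x_{i_1}\cdots \partial x_{i_{n-1}}}$ (a constant). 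Thus the entry of $\mathcal H_F(p)$ indexed by $(i_1,\ldots,i_{n-1})$ is precisely the $n$-th order derivative $\frac{\partial^{n} F}{\partial x_0\,\partial x_{i_1}\cdots \partial x_{i_{n-1}}}$.

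First I would prove the easy direction: if $x_0$ does not appear in $F$, then every monomial of $F$ lies in $\C[x_1,\ldots,x_b]$, so differentiating once with respect to $x_0$ annihilates $F$. Hence $\frac{\partial^{n} F}{\partial x_0\,\partial x_{i_1}\cdots \partial x_{i_{n-1}}} = 0$ for all multi-indices, which by the identification above means every entry of $\mathcal H_F(p)$ vanishes, i.e. $\mathcal H_F(p)=0$. For the converse, suppose $x_0$ does appear in $F$. Since $F$ is homogeneous of degree $n$, any monomial containing $x_0$ can be written as $x_0^a x_1^{b_1}\cdots x_b^{b_b}$ with $a\ge 1$ and $a+\sum b_j = n$. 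Differentiating $F$ a total of $n$ times, once with respect to $x_0$ and the remaining $n-1$ times according to a suitably chosen multi-index that exactly matches the exponents of one such monomial, produces a nonzero constant (the standard multinomial factorial coefficient times the monomial's coefficient). This shows that at least one entry of $\mathcal H_F(p)$ is nonzero, so $\mathcal H_F(p)\neq 0$. Care is needed to choose the multi-index $(i_1,\ldots,i_{n-1})$ so that differentiation does not kill the targeted monomial while also not forcing contributions from other monomials to cancel it; the cleanest route is to pick a monomial of $F$ involving $x_0$ with $a$ as the exponent of $x_0$, and differentiate $a-1$ more times in $x_0$ and $b_j$ times in $x_j$, which isolates that monomial's coefficient.

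The main subtlety—and the step I expect to require the most care—is the converse, specifically ensuring that the chosen order of differentiation isolates a single nonzero monomial coefficient rather than producing an accidental cancellation among several monomials. This is handled by the standard fact that distinct monomials have linearly independent images under the full collection of $n$-th order partial derivatives (equivalently, the map sending a homogeneous degree-$n$ polynomial to its tuple of $n$-th derivatives is injective up to scalar, since it recovers all coefficients). Concretely, differentiating the monomial $x_0^{a}\prod_j x_j^{b_j}$ by $\partial_{x_0}^{a}\prod_j \partial_{x_j}^{b_j}$ yields $a!\prod_j b_j!$ times its coefficient and sends every other degree-$n$ monomial to $0$, so no cancellation can occur.

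Finally, the \emph{In particular} assertion follows immediately. The property of being honest or non-degenerate is invariant under the natural $\GL_{b+1}(\C)$-action on the variables, and for any nonzero $v\in\C^{b+1}$ we may choose coordinates so that $v=p=[1,0,\ldots,0]$. If $F$ were not honest, then $\mathcal H_F(v)=0$ for some nonzero $v$; transporting to $p$ by a linear change of coordinates, the first part shows that $x_0$ does not appear in $F$ in the new coordinates, so $F$ depends on at most $b$ variables. But then $\det \mathcal H_F$ is identically zero: in coordinates where $F$ omits $x_0$, the entire first row (and column) of the symmetric matrix of appropriate derivatives vanishes identically, forcing the hyperdeterminant to vanish, contradicting non-degeneracy. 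Hence a non-degenerate form must be honest.
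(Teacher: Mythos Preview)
Your proof of the main equivalence is correct and in fact more direct than the paper's. The paper argues the implication $\mathcal H_F(p)=0 \Rightarrow x_0\notin F$ by repeatedly applying the Euler identity to show that all lower-order partials $\partial_{i_1\cdots i_k}F(p)$ vanish for $k=0,\ldots,n-1$; you instead observe once and for all that the entries of $\mathcal H_F(p)$ are exactly the top-order constants $\partial_0\partial_{i_1}\cdots\partial_{i_{n-1}}F$, which directly recover the coefficient of every monomial containing $x_0$. Your route avoids the Euler formula entirely and is arguably cleaner.

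For the ``in particular'' clause your strategy coincides with the paper's (change coordinates so that $v=p$, conclude $F$ omits $x_0$, deduce degeneracy), but the final step is not fully justified. You write that ``the entire first row (and column) of the symmetric matrix of appropriate derivatives vanishes identically, forcing the hyperdeterminant to vanish''. For $n=3$ this is fine, since $\mathcal H_F$ is then the ordinary Hessian matrix and a zero row kills the determinant by cofactor expansion. For $n\ge 4$, however, $\mathcal H_F$ is a genuine hypermatrix of format $(b+1)^{\times(n-1)}$, there is no ``row'' or ``column'', and the fact that a trivial face forces $\det\mathcal H_F\equiv 0$ is not an elementary cofactor argument but a nontrivial property of the hyperdeterminant. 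The paper invokes \cite[Corollary XIV.1.5(d)]{GKZ} for precisely this step; you should either cite that result or supply an argument specific to hyperdeterminants rather than matrix language.
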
	

\begin{proof}
	Assume first that $\mathcal H_F(p) = 0$. By  the Euler formula for homogeneous polynomials we have that
	$$
	(n-k)\partial_{i_1 \ldots i_k}F(x_0,\ldots,x_b)= \sum_{\ell=0}^b x_\ell \partial_{\ell, i_1 \ldots i_k}F(x_0, \ldots, x_b)
	$$
	for any $k=0,\ldots, n-1$ and $i_j \in \{0,\ldots, b\}$. 
    Proceeding by induction and using the fact that $\mathcal H_F(p) = 0$, we get that $\partial_{i_1 \ldots i_k}F(p)=0$ for any $k=0,\ldots, n-1$ and $i_j \in \{0,\ldots, b\}$. Thus, $x_0$ does not appear in $F(x_0, \ldots, x_n)$.
	The converse is a simple computation.   		

\medskip

Assume now that $F(x_0, \ldots, x_n)$ is a non-degenerate form. If by contradiction there exists $p\in \mathbb P^b$ such that $\mathcal H_F(p) = 0$ then, up to a  change of coordinates, we may assume $p=[1,0,\ldots,0]$. This means that $F$ does not depends on $x_0$ and so the face of $\mathcal H_F(p)=0$  corresponding to $\partial_0$ is trivial. By \cite[Corollary XIV.1.5(d)]{GKZ}, it follows that $\det \mathcal H_F$ is identically zero, which is a contradiction.
\end{proof}

\begin{definition}
Let $F \in \C[x_0,\ldots,x_b]$ be a form of degree $n$. We denote by $W_F$ the set of points $p \in \mathbb P^b$ such that $\rk \mathcal H_{F}(p) =1$.
\end{definition}

Note that $W_F$ is a closed subset of $\mathbb P^b$ by \cite[Theorem 6.4.13]{Bocci-Chiantini}.
The aim of this section is to prove the following:

\begin{theorem}\label{thm:W_F}
Let $F \in \C[x_0, x_1, \ldots,x_b]$ be a form of degree $n \ge 3$. 
\begin{enumerate}
\item If $F$ is honest, then $W_F \cap \{F \ne 0\}$ is a finite set, and more precisely, 
$$
|W_F  \cap \{F \ne 0\} | \le b+1.
$$

\item If $F$ is non-degenerate, then $W_F$ is a finite set.
\end{enumerate}
\end{theorem}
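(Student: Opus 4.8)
The plan is to analyze the set $W_F$ of rank-one points of the hypermatrix $\mathcal H_F$ by exploiting the structure of rank-one tensors together with the tools provided earlier in the paper, especially Lemma \ref{lem:trick} and the non-degeneracy hypothesis via the hyperdeterminant.

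\textbf{Part (1): the honest case.} First I would observe that at a point $p\in W_F$, the Hessian-type hypermatrix $\mathcal H_F(p)$ has rank one, so $\mathcal H_F(p)=v\otimes v\otimes \cdots \otimes v$ for a suitable nonzero vector $v=v(p)$ (by symmetry of the tensor of $(n-1)$-th derivatives, the factors can be taken equal up to scalars). The key point is that the entries of $\mathcal H_F$ are linear forms in $x_0,\ldots,x_b$, so the assignment $p\mapsto \mathcal H_F(p)$ is linear. The strategy is to suppose for contradiction that $W_F\cap\{F\ne 0\}$ contains at least $b+2$ distinct points, and to derive a bound on the tensor rank of the full tensor $\mathcal H_F$ viewed with its linear dependence on the variables. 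Concretely, I would pick points $p_0,\ldots,p_{b+1}\in W_F\cap\{F\ne 0\}$; these live in $\mathbb P^b$, which has only $b+1$ homogeneous coordinates, so any $b+2$ of them are projectively dependent, and I would use this dependence to produce a linear combination of the slices that forces a contradiction with the rank estimate coming from Lemma \ref{lem:trick}. Honesty guarantees $\mathcal H_F(v)\ne 0$ for all nonzero $v$, which prevents the degenerate situation where a slice vanishes and keeps the linear-algebra argument nondegenerate.

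The cleaner route for Part (1) is likely to reformulate $\mathcal H_F$ (flattened appropriately) as a tensor of type $(b+1)\times(b+1)\times(b+1)$ whose $(b+1)$ frontal slices are the matrices $A_0,\ldots,A_b$ obtained from the coefficients of $x_0,\ldots,x_b$ in the second-order derivative matrix. A rank-one point $p=[p_0:\cdots:p_b]$ with $F(p)\ne 0$ corresponds to a point where $\sum_\ell p_\ell A_\ell$ has rank one. The honesty hypothesis, via Lemma \ref{lem:H_F(p)=0}, says none of these combinations vanishes identically. I would then apply Lemma \ref{lem:trick} in reverse: a large number of distinct rank-one points would force the slice-span to be generated by few rank-one matrices, contradicting that the generic combination has rank close to $b+1$. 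Bounding the count by $b+1$ should follow from a Vandermonde-type non-vanishing argument ensuring that distinct rank-one points give linearly independent data.

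\textbf{Part (2): the non-degenerate case.} Here the goal strengthens from finiteness of $W_F\cap\{F\ne 0\}$ to finiteness of all of $W_F$. By Lemma \ref{lem:H_F(p)=0}, non-degeneracy implies honesty, so Part (1) already handles the points where $F\ne 0$. The remaining task is to show that $W_F\cap\{F=0\}$ is also finite. The plan is to use the hyperdeterminant: since $F$ is non-degenerate, $\det\mathcal H_F$ is not identically zero, and by \cite[Corollary XIV.1.5]{GKZ} its non-vanishing is tied to the faces of the hypermatrix being nontrivial. I would argue that at a rank-one point lying on $\{F=0\}$, the vanishing of $F$ together with rank one imposes enough equations that such points form a proper closed subset cut out by the hyperdeterminant and its associated defect locus; combined with the finiteness from Part (1), a dimension or degree count should close out finiteness of $W_F$.

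The main obstacle I anticipate is the passage between the symmetric tensor $\mathcal H_F$ and its flattening into the $l\times m\times n$ form required by Theorem \ref{thm:slices} and Lemma \ref{lem:trick}, while correctly tracking how rank one of the hypermatrix at $p$ translates into a rank condition on a single matrix slice $\sum_\ell p_\ell A_\ell$. Ensuring that distinct projective points yield genuinely independent constraints (so that the count is exactly $b+1$ and not merely finite) is the delicate quantitative step, and I expect it to rest on a careful Vandermonde or general-position argument.
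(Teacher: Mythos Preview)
Your proposal has genuine gaps in both parts, and the paper's route is quite different from what you outline.

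\textbf{Part (1).} The approach via Lemma \ref{lem:trick} does not go through as stated. First, for $n\ge 4$ the entries of $\mathcal H_F$ are linear forms but $\mathcal H_F(p)$ is an $(n-1)$-fold tensor, not a matrix; the ``frontal slices'' $A_\ell$ you describe are themselves hypermatrices, so Theorem \ref{thm:slices} and Lemma \ref{lem:trick} (which concern matrices) do not apply directly. Second, and more seriously, your contradiction relies on the generic combination $\sum_\ell p_\ell A_\ell$ having rank ``close to $b+1$'', but honesty does not give this: Example \ref{ex:degenerate} exhibits an honest cubic with $\det\mathcal H_F\equiv 0$, so the generic matrix rank can be strictly less than $b+1$. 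Third, the condition $F(p)\ne 0$ plays no visible role in your argument, yet Example \ref{ex:dimW_F>0} shows that without it $W_F$ can be a curve even for honest $F$; any correct argument must use $F(p)\ne 0$ essentially. The paper instead invokes Proposition \ref{prop:F(p)ne0}: at a rank-one point $p$ with $F(p)\ne 0$ one can change coordinates so that $p=[1:0:\cdots:0]$ and $F=x_0^n+G(x_1,\ldots,x_b)$ with $G$ honest. An easy check then shows every other point of $W_F$ lies in $\{x_0=0\}$, so $|W_F\cap\{F\ne 0\}|=1+|W_G\cap\{G\ne 0\}|$, and induction on $b$ yields the bound $b+1$.

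\textbf{Part (2).} Your plan here is too vague to constitute an argument: ``rank one together with $F=0$ imposes enough equations'' and ``a dimension or degree count'' are not proofs, and there is no obvious reason the intersection with the hyperdeterminant locus should be zero-dimensional. The paper's argument is again via a normal form: by Proposition \ref{prop:F(p)=0}, at $p\in W_F\cap\{F=0\}$ one can write $F=x_0x_1^{n-1}+G(x_1,\ldots,x_b)$, and the hyperplane $\{x_1=0\}$ is then shown to be a linear factor of $\det\mathcal H_F$. The substantive work is proving that the map $p\mapsto L_p$ from $W_F\cap\{F=0\}$ to linear factors of $\det\mathcal H_F$ is injective; since $\det\mathcal H_F\not\equiv 0$ by non-degeneracy, it has only finitely many linear factors, and finiteness follows. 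The key ideas you are missing are precisely the two normal-form Propositions \ref{prop:F(p)ne0} and \ref{prop:F(p)=0}.
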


If $F$ is honest but degenerate, then $W_F$ may be of positive dimension as the following example shows:

\begin{example} \label{ex:dimW_F>0}
Let  
$$
F(x_0,\ldots,x_4)=\frac{x_0x_1^2}{2} + x_1x_3x_4 + \frac{x_2x_3^2}{2}.
$$	
be the cubic form as in Example \ref{ex:degenerate}. Then $W_F=\{x_1=x_3=x_0x_2-x_4^2=0\}$  is a  plane conic. Note that $W_F \subset \{F=0\}$.
\end{example}

To prove Theorem \ref{thm:W_F} we are going to use the following Propositions \ref{prop:F(p)ne0} and \ref{prop:F(p)=0}, which show that if there exists $p \in W_F$, then we can write $F$ in a special form, depending on whether $F(p) \ne 0$ or $F(p)=0$.

\begin{lemma} \label{lem:rank1}
	Let $T$ be a tensor of type $a_1 \times \cdots \times a_d$. Assume $\rk T=1$. Let $\{k_1,\ldots,k_{d-2}\}$ and $\{h_1,\ldots,h_{d-2}\}$ be two sets of indices such that  $1 \le k_i, h_i \le a_i$ for any $i$. Then the matrices $\tilde T_{k_1,\ldots k_{d-2},\bullet,\bullet}$ and $\tilde T_{h_1,\ldots h_{d-2},\bullet,\bullet}$ are proportional. 
\end{lemma}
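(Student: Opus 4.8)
The plan is to unwind the definition of rank one and exploit the resulting factorization of the hypermatrix entries. Since $\rk T = 1$, by definition there exist nonzero vectors $v_i \in \C^{a_i}$, $i = 1, \ldots, d$, such that $T = v_1 \otimes \cdots \otimes v_d$. Writing $v_{i,j}$ for the $j$-th coordinate of $v_i$, the associated hypermatrix therefore satisfies
$$
\tilde T(j_1, \ldots, j_d) = v_{1,j_1} v_{2,j_2} \cdots v_{d,j_d}
$$
for all indices $j_i \in [a_i]$.

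First I would fix the first $d-2$ indices. For a choice $k_1, \ldots, k_{d-2}$, the slice $\tilde T_{k_1, \ldots, k_{d-2},\bullet,\bullet}$ is the $a_{d-1} \times a_d$ matrix whose $(s,t)$ entry is
$$
v_{1,k_1} \cdots v_{d-2,k_{d-2}}\, v_{d-1,s}\, v_{d,t} = c_k \cdot (v_{d-1} \otimes v_d)_{s,t},
$$
where $c_k := v_{1,k_1} \cdots v_{d-2,k_{d-2}} \in \C$ is a scalar independent of $s$ and $t$, and $v_{d-1} \otimes v_d$ denotes the rank-one matrix with $(s,t)$ entry $v_{d-1,s} v_{d,t}$. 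In other words, the slice equals $c_k$ times the single matrix $M := v_{d-1} \otimes v_d$, which does not depend on the chosen fixed indices.

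Finally, applying the same computation to the index choice $h_1, \ldots, h_{d-2}$ gives $\tilde T_{h_1, \ldots, h_{d-2},\bullet,\bullet} = c_h M$ with $c_h := v_{1,h_1} \cdots v_{d-2,h_{d-2}}$. Hence both slices are scalar multiples of the common matrix $M$, so they are proportional. I do not expect any genuine obstacle here: the argument is a direct consequence of the factorization of the entries of a rank-one tensor. The only point deserving a remark is the degenerate case in which one of the scalars $c_k, c_h$ vanishes and the corresponding slice is identically zero, which is still covered under the usual convention that the zero matrix is proportional to every matrix.
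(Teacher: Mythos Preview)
Your argument is correct and entirely self-contained: once $T=v_1\otimes\cdots\otimes v_d$, each matrix slice obtained by fixing the first $d-2$ indices is a scalar multiple of the single matrix $v_{d-1}\otimes v_d$, so any two such slices are proportional. The remark about the zero slice is apt and harmless.

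The paper takes a slightly different, more structural route: it forms the $2\times a_{d-1}\times a_d$ sub-tensor $S$ consisting of the two slices in question, observes that $\rk S\le\rk T=1$, and then invokes the general fact (cited from \cite[Proposition~6.4.6]{Bocci-Chiantini}) that any two parallel faces of a rank-one tensor are proportional. Your approach bypasses both the sub-tensor rank inequality and the external reference by simply writing out the factorization explicitly; it is more elementary and arguably cleaner for this particular lemma. The paper's route, on the other hand, illustrates a pattern (pass to a sub-tensor, use rank monotonicity) that recurs elsewhere in the argument.
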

\begin{proof}
Consider the $2\times a_{d-1} \times a_d$ subtensor $S$ of $T$ given by $T_{k_1,\ldots k_{d-2},\bullet,\bullet}$ and $T_{h_1,\ldots h_{d-2},\bullet,\bullet}$. We have $\rk S \le \rk T=1$. If $\rk S=0$, then $S$ is trivial and we are done. If $\rk S=1$ then any two of its parallel faces must be proportional (see for instance \cite[Proposition 6.4.6]{Bocci-Chiantini}).
\end{proof}

\begin{proposition}\label{prop:F(p)ne0}
Let $F \in \C[x_0, x_1, \ldots,x_b]$ be a form of degree $n\ge 3$. Let $p \in W_F,$ with $F(p) \ne 0$. Then after a change of coordinates, we may write  $p=[1:0: \ldots 0]$ and $F=x_0^n + G(x_1, \cdots, x_b)$.	

Moreover, if $F$ is honest, then $G$ is honest.
\end{proposition}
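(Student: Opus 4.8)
The plan is to exploit the rank-one condition at $p$ to pin down the polynomial $\partial_0 F$ and then integrate. First I would choose a linear change of coordinates sending $p$ to $[1:0:\cdots:0]=:e_0$; this is possible for any point, and after it the hypothesis $F(p)\neq 0$ says exactly that the coefficient of $x_0^n$ in $F$ is nonzero. The key observation is that $\mathcal H_F(p)$, a tensor of type $(b+1)^{\times(n-1)}$, is nothing but the full symmetric tensor of $(n-1)$-th derivatives of the degree-$(n-1)$ form $\partial_0 F$: evaluating the linear entries $\partial_{i_1\dots i_{n-1}}F$ at $e_0$ yields $\partial_{0\,i_1\dots i_{n-1}}F=\partial_{i_1\dots i_{n-1}}(\partial_0 F)$.

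Since $p\in W_F$, this symmetric tensor has rank one. A symmetric rank-one tensor has all its factors proportional (this follows, for instance, from Lemma \ref{lem:rank1} applied to its two-dimensional slices, or from the essential uniqueness of a rank-one decomposition), so $\mathcal H_F(p)=w^{\otimes(n-1)}$ for some $w\in\C^{b+1}$ after absorbing a scalar $(n-1)$-th root, which exists over $\C$. Translating back to forms, this means $\partial_0 F=\lambda\,\ell^{\,n-1}$ for the linear form $\ell=\sum_i w_i x_i$ and some $\lambda\neq 0$. I then compute the coefficient of $x_0^n$ in $F$, which is proportional to $\lambda\,a^{\,n-1}$ where $a$ is the coefficient of $x_0$ in $\ell$; the hypothesis $F(p)\neq 0$ forces $a\neq 0$. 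Integrating $\partial_0 F=\lambda\,\ell^{\,n-1}$ in $x_0$ gives $F=\tfrac{\lambda}{na}\,\ell^{\,n}+H(x_1,\dots,x_b)$, and the invertible linear change $y_0=\ell$, $y_i=x_i$ for $i\geq 1$, followed by a rescaling of $y_0$, puts $F$ into the form $x_0^n+G(x_1,\dots,x_b)$ while keeping $p=[1:0:\cdots:0]$.

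For the honesty statement I simply read off $\mathcal H_F$ from the normal form $F=x_0^n+G$. Since $G$ is independent of $x_0$, every mixed $(n-1)$-th derivative (involving both the index $0$ and some index $\geq 1$) vanishes; the entry with all indices equal to $0$ is $n!\,x_0$; and the block with all indices in $\{1,\dots,b\}$ is exactly $\mathcal H_G$. Hence for $v=(v_0,v_1,\dots,v_b)$ one has $\mathcal H_F(v)=0$ if and only if $v_0=0$ and $\mathcal H_G(v_1,\dots,v_b)=0$. If $G$ were not honest there would be a nonzero $(v_1,\dots,v_b)$ annihilating $\mathcal H_G$, and then $v=(0,v_1,\dots,v_b)\neq 0$ would annihilate $\mathcal H_F$, contradicting the honesty of $F$; thus $F$ honest implies $G$ honest.

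The main obstacle is the first step: correctly identifying $\mathcal H_F(p)$ with the symmetric derivative tensor of $\partial_0 F$ and justifying that rank one forces it to be the power of a single linear form (rather than a general decomposable tensor), while keeping careful track of the nonzero scalars so that the hypothesis $F(p)\neq 0$ is used precisely where it is needed, namely to guarantee that $\ell$ genuinely involves $x_0$, which is what makes the final coordinate change invertible. The remaining computations — the integration and the block structure of $\mathcal H_F$ — are routine.
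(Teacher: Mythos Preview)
Your argument is correct and genuinely different from the paper's. The paper proceeds by an explicit, layer-by-layer analysis: after moving $p$ to $[1:0:\cdots:0]$ it writes $F$ as $\tfrac{x_0^n}{n!}+x_0^{n-1}(\text{linear})+\tfrac{x_0^{n-2}}{2}(\text{quadratic})+R$, examines the $2$-dimensional slices $(\partial_j\partial_\ell\partial_0^{n-3}F(p))$ of $\mathcal H_F(p)$ to kill the linear and quadratic parts, and then runs a case split ($n=3$, $n=4$, $n\ge 5$) together with an induction on $k$ to show that every intermediate term $x_0^{n-k}G_k$ with $3\le k\le n-1$ must vanish, each time producing a rank-$\ge 2$ $2\times 2$ minor from a putative monomial. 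Your route bypasses all of this by the single observation that $\mathcal H_F(e_0)$ is exactly the constant $(n-1)$-th derivative tensor of the degree-$(n-1)$ form $\partial_0 F$; rank one plus symmetry (as in \cite[Proposition 7.2.1]{Bocci-Chiantini}, which the paper itself invokes in Lemma~\ref{lem:x_0H}) forces $\partial_0 F=\mu\,\ell^{\,n-1}$, and $F(p)\neq 0$ is precisely what makes $\ell$ involve $x_0$, so that integrating in $x_0$ and substituting $y_0=\ell$ is legitimate and finishes in one stroke. Your approach is shorter and more conceptual; the paper's approach has the virtue of being entirely hands-on with explicit minors and never appealing to the structure theory of symmetric rank-one tensors. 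The honesty clause is handled identically in both.
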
	

\begin{proof}
After taking a change of coordinates, we may write $p=[1,0, \ldots , 0]$ and  $$
F= \frac{x_0^n}{n!} + \frac{x_0^{n-1}}{(n-1)!} (c_1 x_1 + \ldots +c_b x_b) + \frac{x_0^{n-2}}{2(n-2)!}(x_1^2 + \ldots + x_r^2) + 
R(x_0,x_1, \ldots, x_b), 
$$ 
where $r \in \{0, \ldots, b\}$ and $R$ is a form such that the degree with respect to $x_0$ of every monomial in $R$ is at most  $n-3$.

The submatrix of $\mathcal H_{F}(p)$ which corresponds to 
 $
 \left(\partial_j \partial_\ell \partial_0^{n-3}F(p) \right)_{j,\ell = 0, \ldots,  b }
 $ 
is
$$
A=\begin{pmatrix}
1 & c_1 & \ldots & c_r & c_{r+1} & \ldots & c_b \\
c_1 & 1  & 0 & \ldots & \ldots & \dots & 0 \\
\vdots & 0 & 1  & 0 & \cdots & \cdots & \vdots \\
c_r & 0 & \ldots & 1 & 0 & \ldots & 0  \\
\vdots 
&\vdots & \vdots  &\vdots & 0 & \ldots & 0 \\
\vdots&\vdots & \vdots  &\vdots & \vdots & \ddots & \vdots \\
c_b & 0 & \cdots & \cdots & \cdots & \cdots & 0 \\
\end{pmatrix}.
$$

Since $p \in W_F$ and $A\neq 0$, we have that $\rk A = 1$. This implies that $r \in \{ 0, 1 \}$ and $c_i=0$ for $i \ge 2$.

Thus, there exist $\lambda,c\in \mathbb C$ such that after applying  the change of coordinate $x_0 \mapsto x_0 -\lambda x_1$ which fixes the point $p=[1: 0 : \ldots : 0 ]$,  we may write 
$$
F=\frac{x_0^n}{n!} + \frac{c}{2(n-2)!} x_0^{n-2}x_1^2 +  R'(x_0,\dots,x_n),
$$
where $R'$ is a form such that the degree with respect to $x_0$ of every mononmial in $R'$ is at most  $n-3$.
In particular, $A$ becomes
$$
\begin{pmatrix}
1 & 0 & 0 & \ldots  \\
0 & c  & 0 & \ldots  \\
0 & 0 & 0 & \ldots  \\
\vdots & \vdots & \vdots  &\ddots  \\
\end{pmatrix}.
$$
Since $\rk A =1$, it follows that $c=0$ and we may write
$$
F= \frac{x_0^n}{n!} + x_0^{n-3}G_3(x_1,\ldots,x_b) + \ldots +x_0G_{n-1}(x_1,\ldots,x_b) + G(x_1,\ldots,x_b),
$$
where, for any $i=3,\ldots, n-1$, $G_i$ is a homogeneous polynomial of degree $i$ in $x_1,\ldots,x_b$. In particular, if $n=3$ then $F$ is in the desired form.

\medskip 

We distinguish two cases. Assume first that $n=4$. We assume by contradiction that there exists a monomial  $x_0x_{i_1}x_{i_2}x_{i_3}$ of $x_0G_3$ with $i_1,i_2,i_3>0$, 
and we consider the submatrix of $\mathcal H_F(p)$ given by
$$
B:=\left(\partial_j \partial_\ell  \partial_{i_3}F(p) \right)_{j,\ell = 0, \ldots,  b }.
$$ 
By Lemma \ref{lem:rank1}, $A$ and $B$ are proportional, but $B_{00}=0$ and so $B=0$. This gives a contradiction because $B_{i_1i_2} \ne 0$. Hence $G_3=0$ and this proves that if  $n=4$ then $F$ is in the desired form. 

\medskip 

Assume now that $n \ge 5$.
We proceed by induction on $k$ to show that  $G_k=0$ for any $k=3,\ldots,n-1$.
Assume by contradiction that there exists a monomial  $x_0^{n-3}x_{i_1}x_{i_2}x_{i_3}$ of $x_0^{n-3}G_3$, with $i_1,i_2,i_3>0$ and consider the submatrix of $\mathcal H_F(p)$ given by
$$
B:=\left(\partial_j \partial_\ell \partial_0^{n-5}\partial_{i_2} \partial_{i_3}F(p) \right)_{j,\ell = 0, \ldots,  b }.
$$ 

We have that  $B_{00}= \partial_0^{n-3}\partial_{i_2} \partial_{i_3}F(p)=0$

and $B_{0i_1}=B_{i_10}=\partial_0^{n-4}\partial_{i_1}\partial_{i_2} \partial_{i_3}F(p)\ne 0$, which implies that $\rk B\ge 2$, a contradiction. Thus, $G_3=0$. 

We now assume that $k\ge 4$. Assume by contradiction that there exists a monomial  $x_0^{n-k}x_{i_1}\cdot \ldots\cdot x_{i_k}$ in $x_0^{n-k}G_k$, with $i_1,\dots,i_k>0$. 
Consider the submatrix of $\mathcal H_F(p)$ given by
$$
B:=\left(\partial_j \partial_\ell \partial_0^{n-k-2}\partial_{i_2} \cdots \partial_{i_k}F(p) \right)_{j,\ell = 0, \ldots,  b }.
$$ 
Since $G_i=0$ for $i=3,\ldots, k-1$, we have that $B_{00}= \partial_0^{n-k}\partial_{i_2} \cdots \partial_{i_k}F(p)=0$ and $B_{0i_1}=B_{i_10}=\partial_0^{n-k-1}\partial_{i_1}\cdots \partial_{i_k}F(p) \ne 0$, which implies that $\rk B\ge 2$, a contradiction. Thus, $G_k=0$ and hence
$$
F= \frac{x_0^n}{n!} + G(x_1,\ldots,x_b),
$$
as claimed.

\smallskip

Assume now that $F$ is honest and that there exists $q=[q_1, q_2, \ldots ,q_b]$ such that $\mathcal H_G(q)=0$. Then $\mathcal H_F([0,q_1,q_2,\ldots ,q_b])=0$, which gives $q_1=\ldots=q_b=0$ and so $G$ is honest as well. 
\end{proof}

\begin{lemma}\label{lem:x_0H}
Let $H\in \mathbb C[x_1,\dots,x_b]$ be a form of degree $n-1$ for some $n\ge 3$ and let 
$ 
R=x_0H(x_1,\ldots, x_b)
$. Assume that $\rk \mathcal H_R(p)=1$, where $p:=[1:0:\ldots:0]$. 

Then, after a change of coordinates in $(x_1,\ldots,x_b)$, we may write
 $$
 R=x_0x_1^{n-1}.
 $$	
\end{lemma}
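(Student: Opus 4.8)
The plan is to compute the tensor $\mathcal H_R(p)$ explicitly and recognise its nonzero part as the symmetric tensor of $H$, then feed in the rank-one hypothesis. Writing out the $(n-1)$-th order derivatives of $R=x_0H$ and evaluating at $p=[1:0:\cdots:0]$, there are three cases. Since $R$ is linear in $x_0$, any derivative containing $\partial_0$ twice vanishes identically. A derivative $\partial_0\partial_{i_2}\cdots\partial_{i_{n-1}}R$ with a single $\partial_0$ equals $\partial_{i_2}\cdots\partial_{i_{n-1}}H$, a homogeneous form of degree $1$ in $x_1,\ldots,x_b$, hence vanishes at $p$. Finally, a derivative $\partial_{i_1}\cdots\partial_{i_{n-1}}R$ with all indices in $\{1,\ldots,b\}$ equals $x_0\,\partial_{i_1}\cdots\partial_{i_{n-1}}H$; since $H$ has degree $n-1$ this is the constant $\partial_{i_1}\cdots\partial_{i_{n-1}}H$ times $x_0$, taking the value $\partial_{i_1}\cdots\partial_{i_{n-1}}H$ at $p$. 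Consequently every entry of $\mathcal H_R(p)$ indexed by a tuple meeting $\{0\}$ is zero, and the sub-tensor $T_H$ of $\mathcal H_R(p)$ on the index set $\{1,\ldots,b\}^{n-1}$ is exactly the symmetric hypermatrix of the top-order derivatives of $H$.

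Next I would extract the rank information. Since $\mathcal H_R(p)\neq 0$ (it has rank one) and its only possibly-nonzero entries lie in $T_H$, we must have $H\neq 0$ and $T_H\neq 0$. Because $T_H$ is a sub-tensor of $\mathcal H_R(p)$, its rank is at most $\rk\mathcal H_R(p)=1$ (the rank of a sub-tensor never exceeds that of the ambient tensor), so $\rk T_H=1$. Thus there are nonzero vectors $v_1,\ldots,v_{n-1}\in\mathbb C^b$ with $T_H=v_1\otimes\cdots\otimes v_{n-1}$.

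The key structural step is to exploit the symmetry of $T_H$: since partial derivatives commute, $T_H$ is invariant under every permutation of its factors. For a rank-one tensor $v_1\otimes\cdots\otimes v_{n-1}$ this forces the factors to be pairwise proportional, since swapping two non-proportional factors would change the tensor (alternatively one applies Lemma \ref{lem:rank1} to the two-index slices, which must all be proportional). Hence $T_H=\lambda\,v^{\otimes(n-1)}$ for some nonzero $v\in\mathbb C^b$ and $\lambda\in\mathbb C^\ast$. Translating back through the Taylor identity $H=\tfrac{1}{(n-1)!}\,T_H(x,\ldots,x)$, this says $H=c\,\ell^{n-1}$ for a nonzero linear form $\ell=\sum_i v_i x_i$ in $x_1,\ldots,x_b$ and some $c\in\mathbb C^\ast$.

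Finally I would absorb the constant and normalise the coordinates. Setting $m=c^{1/(n-1)}\ell$ for a fixed $(n-1)$-th root of $c$, we have $H=m^{n-1}$ with $m$ a nonzero linear form, and a linear change of coordinates in $(x_1,\ldots,x_b)$ sending $m\mapsto x_1$ (which does not involve $x_0$ and fixes $p$) yields $H=x_1^{n-1}$, hence $R=x_0x_1^{n-1}$, as claimed. The only genuinely delicate point is the passage from tensor rank one to a symmetric power: one must check that the notion of rank used here, together with the symmetry of $T_H$, really does force all factors to be proportional, rather than merely producing a symmetric statement up to a constant; the rest is a direct computation.
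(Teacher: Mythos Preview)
Your proof is correct and follows essentially the same approach as the paper's: identify the tensor of $(n-1)$-th derivatives of $H$ as a sub-tensor of $\mathcal H_R(p)$, deduce it has rank one, and use symmetry to write it as $v^{\otimes(n-1)}$. The only differences are expository: you spell out explicitly why all entries of $\mathcal H_R(p)$ touching the index $0$ vanish (the paper leaves this implicit), and you sketch the argument that a symmetric rank-one tensor must be a pure power, whereas the paper simply cites \cite[Proposition 7.2.1]{Bocci-Chiantini} for this fact.
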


\begin{proof}
Consider the tensor $T$ of the $(n-1)$-th order derivatives of $H$.  

Since $H$  is non-zero, it follows that $T$ is non-zero. Moreover $T$  is a subtensor of $\mathcal H_R(p)$ and, therefore, we have that  $\rk T=1$. Thus, since $T$ is symmetric, $T=v^{\otimes (n-1)}$ for some vector $v \in \mathbb C^b$ (see \cite[Proposition 7.2.1]{Bocci-Chiantini}).
After taking a change of coordinates, we may assume $v=(1,0, \ldots,0)$ which, after multiplying by a scalar, implies that $H=x_1^{n-1}$. 
\end{proof}

\begin{proposition}\label{prop:F(p)=0}
Let $F \in \C[x_0, x_1, \ldots,x_b]$ be a  form of degree $n\ge 3$. Let $p \in W_F,$ with $F(p)=0.$ Then, after a change of coordinates, we may assume $p=[1,0, \ldots,0]$ and $F=x_0x_1^{n-1} + G(x_1, \ldots, x_b).$
\end{proposition}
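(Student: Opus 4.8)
The plan is to mirror the proof of Proposition \ref{prop:F(p)ne0}, but to exploit the vanishing $F(p)=0$ to force $x_0$ to appear only linearly. First I would choose coordinates so that $p=[1:0:\ldots:0]$ and expand $F$ by powers of $x_0$, writing
$$
F=\sum_{k=0}^{n} x_0^{\,k}\,P_{n-k}(x_1,\ldots,x_b),
$$
where each $P_j$ is homogeneous of degree $j$ in $x_1,\ldots,x_b$. Since $F(p)=P_0$, the hypothesis $F(p)=0$ says precisely that the pure term $x_0^n$ is absent, so $F=x_0^{n-1}P_1+\cdots+x_0P_{n-1}+P_n$. The goal then becomes: show $P_1=\cdots=P_{n-2}=0$, and that $P_{n-1}$ is an $(n-1)$-st power of a linear form.

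Next I would use that $\mathcal H_F(p)$ is a symmetric tensor of order $n-1$ with $\rk\mathcal H_F(p)=1$ (as $p\in W_F$); hence, exactly as in Lemma \ref{lem:x_0H}, it equals $\lambda\, w^{\otimes(n-1)}$ for some nonzero $w\in\mathbb C^{b+1}$ and $\lambda\neq 0$ (\cite[Proposition 7.2.1]{Bocci-Chiantini}). The key observation — and the place where $F(p)=0$ enters decisively — is that the $(0,\ldots,0)$ entry of $\mathcal H_F(p)$ is $\partial_0^{\,n-1}F(p)=n!\,P_0=0$, because the only other contribution $(n-1)!\,P_1$ vanishes at $p$ (as $P_1$ is linear in $x_1,\ldots,x_b$). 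Comparing with $\lambda\, w_0^{\,n-1}$ forces $w_0=0$. Consequently every entry of $w^{\otimes(n-1)}$ carrying at least one index equal to $0$ vanishes.

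Now I would translate this vanishing into vanishing of the intermediate forms. A direct bookkeeping shows that, for each $m$, the entries of $\mathcal H_F(p)$ with exactly $m$ indices $\geq 1$ are, up to a nonzero constant, the $m$-th order partials $\partial_{j_1}\cdots\partial_{j_m}P_m$; no other $P_\ell$ contributes, since matching degrees forces the surviving monomial to come from $x_0^{\,n-m}P_m$. For $1\le m\le n-2$ such an entry necessarily contains a zero index, hence vanishes, so all $m$-th derivatives of the degree-$m$ form $P_m$ vanish and $P_m=0$. Therefore $F=x_0P_{n-1}+P_n$. Since $P_n$ (no $x_0$) and the already-killed terms all contribute $0$ to $\mathcal H_F(p)$, we obtain $\mathcal H_{x_0P_{n-1}}(p)=\mathcal H_F(p)$, which has rank one; applying Lemma \ref{lem:x_0H} with $H=P_{n-1}$ yields, after a coordinate change in $x_1,\ldots,x_b$, that $x_0P_{n-1}=x_0x_1^{n-1}$. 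Setting $G:=P_n$ gives $F=x_0x_1^{n-1}+G(x_1,\ldots,x_b)$, as claimed.

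I expect the main obstacle to be the bookkeeping in the third step: verifying cleanly that the $m$-index entries of $\mathcal H_F(p)$ see only $P_m$, so that ``entries with a zero index vanish'' is genuinely equivalent to ``$P_m=0$ for $m<n-1$''. Conceptually, however, the whole argument hinges on the single geometric fact $w_0=0$ — that the rank-one direction of the Hessian tensor lies in the hyperplane $\{x_0=0\}$ — which is exactly what fails when $F(p)\ne 0$ and is precisely what forces $x_0$ to enter $F$ only to the first power.
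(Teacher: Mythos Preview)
Your argument is correct, and the bookkeeping you flag as the obstacle works exactly as you describe: in $\partial_0^{\,n-1-m}\partial_{j_1}\cdots\partial_{j_m}F$ only the terms $x_0^{n-m}P_m$ and $x_0^{n-1-m}P_{m+1}$ survive the differentiation, and the latter is linear in $x_1,\ldots,x_b$ and hence vanishes at $p$, leaving $(n-m)!\,\partial_{j_1}\cdots\partial_{j_m}P_m$.

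Your route differs from the paper's. The paper proceeds by induction on $k$, looking only at $2$-dimensional slices $(\partial_j\partial_\ell\partial_{i_2}\cdots\partial_{i_k}\partial_0^{\,n-k-2}F(p))_{j,\ell}$ and using that a rank-$\le 1$ matrix with $A_{00}=0$ must have $A_{0j}=0$; the vanishing of $R_{k-1}$ is needed at each step to guarantee $A_{00}=0$. You instead invoke the symmetric rank-one decomposition $\mathcal H_F(p)=\lambda\,w^{\otimes(n-1)}$ up front (the paper only uses it later, inside Lemma~\ref{lem:x_0H}), deduce $w_0=0$ from the single entry $\partial_0^{\,n-1}F(p)=0$, and then kill all $P_m$ with $m\le n-2$ simultaneously. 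Your approach is cleaner and explains more transparently \emph{why} $F(p)=0$ forces $x_0$ to appear only linearly --- the rank-one direction lies in $\{x_0=0\}$ --- while the paper's argument is slightly more elementary in that it never needs the symmetric decomposition globally, only the rank-one condition on ordinary matrices. Both finish identically via Lemma~\ref{lem:x_0H}.
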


\begin{proof}
We can assume that $p=[1,0,\cdots,0]$ and write
	$$
	F=x_0^{n-1}\cdot R_1 + x_0^{n-2} \cdot R_2 + \cdots + x_0 \cdot R_{n-1} +R_n.
	$$ 
	where  $R_i=R_i(x_1, \cdots, x_b)$ is a form of degree $i$, for $i=1,\dots,n$.  We first show that $R_i=0$ for $i=1, \ldots, n-2$. The proof is by induction on $i$.
	
We may write  $R_1=	c_1 x_1+ \cdots + c_b x_b$. Let  $A$ be the submatrix of $\mathcal H_{F}(p)$ given by 
$$
A=\left(\partial_j \partial_\ell \partial_0^{n-3}F(p)\right)_{j, \ell = 0, \ldots,  b }.
$$ 
Since $\rk A \le 1$ and 	
$
A_{00}= \partial_0^{n-1}F(p) =0
$
we must have 	 
$$
0=A_{0j}=A_{j0}=\partial_j \partial_0^{n-2} F(p)=(n-1)!c_j
$$
for all $j=1, \cdots , b$, which implies $R_1=0$.
	
Now, let $2 \le k \le n-2$ and suppose that $R_{k-1}=0$. We are going to show that $R_k=0$.	
Assume by contradiction that there exists a monomial  
$x_0^{n-k} x_{i_1} \cdot \ldots\cdot x_{i_k}$ in $x_0^{n-k}R_k$ with $i_1,\dots,i_k>0$. Consider the submatrix $A$ of $\mathcal H_{F}(p)$ given by

$$
A=\left(\partial_j \partial_\ell \partial_{i_2} \cdots \partial_{i_k}\partial_0^{n-k-2}F(p)\right)_{j, \ell = 0, \ldots,  b }.
$$ 
Since $R_{k-1}=0$, we have that 
$
A_{00}=\partial_{i_2} \cdots \partial_{i_k}\partial_0^{n-k}F(p)=0.
$
Since $x_0^{n-k} x_{i_1} \cdot \ldots\cdot x_{i_k}$ is a monomial in $x_0^{n-k}R_k$, we have that 
$$
A_{0i_1}=A_{i_10}=\partial_{i_1} \partial_{i_2} \cdots \partial_{i_k}\partial_0^{n-k-1}F(p)\neq 0,
$$
contradicting the fact that $\rk A\le 1$. We hence conclude that $R_k=0$, as claimed.

\medskip 

Thus, we may write
	$$
	F=x_0 \cdot H(x_1, \cdots , x_b) + G(x_1, \cdots , x_b).
	$$
	where $H$ is a form of degree $n-1$ such that if $R:=x_0\cdot H$ then $\rk \mathcal H_R(p)=\rk \mathcal H_F(p)=1$.
    We can then conclude by applying Lemma \ref{lem:x_0H}.
\end{proof}

\begin{proof}[Proof of Theorem \ref{thm:W_F}]
We start by  proving $(1)$.  The proof is by induction on $b$. For $b=0$, the statement is trivial, so we assume $b \ge 1$.
Let $p \in W_F$ such that $F(p) \ne 0$. 
By Proposition \ref{prop:F(p)ne0}, after a change of coordinates, we may assume $p=[1,0,\ldots ,0]$ and
$$
F=x_0^n + G(x_1,\cdots, , x_b),
$$
where $G$ is an honest form of degree $n$.

We claim that for any $q \in W_F$ with $q \ne p$, we have $q \in \{x_0=0\}$. Suppose by contradiction that $q=[1,q_1, \ldots ,q_b ]$. 
If $\mathcal H_G([q_1, \ldots,q_b ]) \ne 0 $, then $\rk \mathcal H_F(q) \ge 2$. Hence  $\mathcal H_G([q_1, \ldots,q_b ]) = 0$, which implies $[q_1, \ldots,q_b ]=0$ and the claim is proven.
 
 The claim implies that $|W_F \cap \{F \ne 0\}| = 1 + |W_G\cap \{G \ne 0\}|$ and we are done by induction. 
 
\medskip

We now prove $(2)$.  By $(1)$, it is enough to show that $W_F':=W_F \cap \{F=0\}$ is finite.  Let $p \in W_F'$. We say that a hyperplane $L \subset \mathbb P^b$ is \emph{associated} to $p$ if:
\begin{itemize}
	\item $\det \mathcal H_F$ vanishes along $L$;
	\item $p \in L$; and
	\item if $T=F_{|L}$, then $\mathcal H_T(p)$ is trivial.
\end{itemize}

By Proposition \ref{prop:F(p)=0} we may assume that $p=[1,0,\ldots,0]$ and 
$$
F(x_0, \ldots, x_n)= x_0x_1^{n-1} + x_1^{n-1}S(x_2,\ldots,x_n) + x_1H(x_1,\ldots,x_n) + R(x_2,\ldots,x_n),
$$
where the degree of $H$ with respect to $x_1$ is at most $n-3$. Replacing $x_0$ with $x_0 - S$, we may assume 
\begin{equation}\label{eq:special}
F(x_0, \ldots, x_n)=x_0x_1^{n-1} + G(x_1, \ldots, x_n)= x_0x_1^{n-1} + x_1H(x_1,\ldots,x_n) + R(x_2,\ldots,x_n),
\end{equation}
where the degree of $H$ with respect to $x_1$ is at most $n-3$.

We claim that $L_p=\{x_1=0\}$ is an hyperplane associated to $p$. Indeed, to see that  $\det \mathcal H_F$ vanishes along $L_p$, note that for any point $q \in \{x_1=0\}$, $\mathcal H_F(q)$ has a trivial face (the one corresponding to $\partial_0$) and so we conclude by \cite[Corollary 1.5(d)]{GKZ}. Moreover, Lemma \ref{lem:H_F(p)=0} implies that if $T=F_{|{L_p}}$, then $\mathcal H_T(p)$ is trivial.

We now show that there exists a bijection between points of $W_F'$ and linear forms dividing $\det \mathcal H_F$, which implies that $W_F'$ is finite since $\det \mathcal H_F$ is not identically zero. 

Let $L'$ be a hyperplane associate to $p$. Assume by contradiction that $L' \ne L_p=\{x_1=0\}$. Then, after a  change of coordinates in $x_2,\ldots,x_n$, we can assume that $L'=\{\alpha x_1 - x_n=0\}$ for some $\alpha \in \mathbb C$. This gives
$$
T'(x_0,x_1,\ldots, x_{n-1}) :=F_{|L'}=x_0 x_1^{n-1} + G(x_1,\ldots,x_{n-1}, \alpha x_1)
$$
and so $\mathcal H_{T'}(p) \ne 0$, which is a contradiction.

Let $q \in W_F'$ be a point with associated hyperplane $L_q=L_p=\{x_1=0\}$. We want to show that $q=p$. Assume by contradiction that $q \ne p$. Since $q\in L_q$, after a  change of coordinates in $x_2, \ldots, x_n$ we may assume $q=[q_0,0,1,0,0\ldots,0]$ and we may write $F$ in the same form as in  \eqref{eq:special}. Since $\{x_1=0\}$ is a hyperplane associate to $q$, we have $\mathcal H_R([1,0,\ldots,0])=0$ and so by Lemma \ref{lem:H_F(p)=0} we get that $R$ does not depend on $x_2$. 
Let $m$ be the minimum $t\in \{1,\ldots,n\}$  such that there exists a monomial in $F$ of the form $x_1^tx_2x_{i_1}\cdots x_{i_{n-t-1}}$  where $i_j \ge 2$ for any $j=\{1, \ldots, n-t-1\}$. Since $F$ is honest, it depends on $x_2$. Thus, since $R$ does not depend on $x_2$, such $m\ge 1$ exists.
Since the degree of $H$ with respect to $x_1$ is at most $n-3$ we have $m \le n-2$, i.e.\ $n-m-1 \ge 1$. 
Let $i_2,\ldots,i_{n-m-1}\ge 2$ such that $x_1^mx_2x_{i_1}\cdots x_{i_{n-m-1}}$ is a monomial of $F$.
We  consider the matrix 
$$
A=\left(\partial_j \partial_\ell\partial_1^{m-1} \partial_{i_2} \cdots \partial_{i_{n-m-1}}F(q)\right)_{j, \ell = 0, \ldots,  b }.
$$ 
Then $A_{1i_1} \ne 0$. Since $\rk A \le 1$ and $i_1 \ge 2$, we get that $A_{i_1 i_1} \ne 0$ by symmetry,
i.e. the monomial $x_1^{m-1}x_2x_{i_1}^2x_{i_2}\cdots x_{n-m-1}$ appears in $F$ with non-zero coefficient, which contradicts the minimality of $m$. We hence conclude that $q=p$ and the claim follows.
\end{proof}

\section{Intersection forms} \label{s:intersection}

Let $M$ be a closed (i.e.\ compact and without boundary) oriented topological manifold of dimension $d=2n$. 
Consider the symmetric 
$n$-multilinear map defined by the cup product and evaluated on the orientation class:
$$
\phi_M : H^{n} \to \Z
$$
where  
$H=H^2_{tf}(M,\Z)$ denotes the singular cohomology group $H^2(M,\Z)$ modulo its torsion subgroup. Fix a basis $\underline h=(h_0,\ldots,h_b)$ of $H$.
Then we define a homogeneous $n$-form $F_M \in \Z[x_0,\ldots,x_b]$ by

$$
F_M=F_M(x_0,\ldots,x_b):= \sum_{\underline m} \binom{n}{\underline m} \phi_M(h_0^{m_0}\cdot\ldots\cdot h_b^{m_b})x_0^{m_0}\cdot\ldots\cdot x_b^{m_b},
$$
where the sum runs over all the $(b+1)$-tuples $\underline m=(m_0,\dots,m_b)$ such that $m_0,\ldots,m_b\ge 0$ and $\sum m_i=n$, and we denote
$$
\binom{n}{\underline m}= \frac{n!}{m_0! m_1! \cdots m_b!}.
$$

\begin{lemma}\label{lem:linear}Set-up as above. 
Consider the $(n-1)$-multilinear map $\phi_M(h_0,-,\ldots,-): H^{n-1} \to \Z$ with associated hypermatrix $A$. Then
$$
n!A=\mathcal H_{F}(h_0). 
$$
\end{lemma}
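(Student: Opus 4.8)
The plan is to recognize $F = F_M$ as the restriction to the diagonal of the symmetric multilinear form $\phi_M$, and then to differentiate using the multilinear product rule. Write $v = v(x) = x_0 h_0 + \cdots + x_b h_b \in H \otimes \mathbb{C}$. Expanding $\phi_M(v,\ldots,v)$ (with $n$ copies of $v$) by multilinearity and collecting terms according to the multi-index $\underline m$ recording how many times each $h_i$ occurs, the symmetry of $\phi_M$ produces exactly the multinomial coefficient $\binom{n}{\underline m}$ in front of $\phi_M(h_0^{m_0}\cdots h_b^{m_b})\,x_0^{m_0}\cdots x_b^{m_b}$. Hence $F(x) = \phi_M(v,\ldots,v)$. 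This is the only place where the precise shape of the coefficients in the definition of $F_M$ is used.

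Next I would differentiate. Since $\partial_i v = h_i$, the product rule for the $n$-linear map $\phi_M$ gives $\partial_i F = \sum_{s=1}^n \phi_M(v,\ldots,h_i,\ldots,v)$, with $h_i$ in the $s$-th slot; by symmetry all $n$ summands coincide, so $\partial_i F = n\,\phi_M(h_i,v,\ldots,v)$. Iterating this and invoking symmetry at each stage yields, by a straightforward induction on the number of derivatives,
$$
\partial_{i_1}\cdots\partial_{i_{n-1}} F = n(n-1)\cdots 2\cdot \phi_M(h_{i_1},\ldots,h_{i_{n-1}},v) = n!\,\phi_M(h_{i_1},\ldots,h_{i_{n-1}},v),
$$
a linear form in $x$ through its dependence on $v$ (the factor is $n!/(n-(n-1))! = n!$, since a degree-$n$ form is differentiated $n-1$ times).

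Finally, under the chosen basis the element $h_0$ is identified with the coordinate point $[1:0:\cdots:0]$, so evaluating the entries of $\mathcal H_F$ at $h_0$ amounts to setting $v = h_0$ in the displayed formula. This gives the $(i_1,\ldots,i_{n-1})$-entry of $\mathcal H_F(h_0)$ as $n!\,\phi_M(h_{i_1},\ldots,h_{i_{n-1}},h_0)$, which by symmetry equals $n!\,\phi_M(h_0,h_{i_1},\ldots,h_{i_{n-1}}) = n!\,A_{i_1,\ldots,i_{n-1}}$. Comparing entrywise gives $n!A = \mathcal H_F(h_0)$.

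I do not anticipate a genuine obstacle: the whole content is the polarization identity together with the multilinear chain rule. The only points requiring care are the bookkeeping of the factorial constants—verifying that differentiating $(n-1)$ times brings down exactly $n!$ rather than some other product—and keeping the interpretation of ``evaluation at $h_0$'' (plugging the coordinate vector of $h_0$ into the linear-form entries of $\mathcal H_F$) consistent with the evaluation convention used elsewhere in the paper.
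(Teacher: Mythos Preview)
Your proof is correct and follows essentially the same approach as the paper: both verify the identity entrywise by exploiting that $F_M$ is the restriction to the diagonal of the symmetric multilinear form $\phi_M$. Your presentation via the multilinear chain rule is slightly cleaner than the paper's direct monomial-by-monomial computation, but the content is identical.
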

\begin{proof}
Let $i_2, \ldots, i_n\in \{0,\ldots,b\}$ such that if $m_k$ is the number of $i_j$'s equal to $k$ then $m_0+\ldots +m_b=n-1$.

Then $A_{(i_2,\ldots,i_n)}=\phi_M(h_0,h_{i_2}, \ldots, h_{i_n})$ and if $I=(0,i_2,\ldots,i_n)$, then 
$$\begin{aligned}
\mathcal H_{F}(h_0)_{(i_2,\ldots,i_n)}&=
\frac{\partial^{n-1}}{\partial_0^{m_0} \cdots \partial_b^{m_b}} F(1,0,\ldots,0)\\
&=\frac{\partial^{n-1}}{\partial_0^{m_0} \cdots \partial_b^{m_b}}\binom{n}{\underline{m}} \phi_M(h_0,h_{i_2,\ldots,h_{i_n}}) x_0 x_I(1,0\ldots,0)\\
&=n!\phi_M(h_0,h_{i_2}, \ldots, h_{i_n}).
\end{aligned}
$$ Thus, our claim follows.
\end{proof}

If $X$ is a compact complex manifold, then we denote by $F_X$ the $n$-form associated to the  topological manifold underlying $X$.

\begin{lemma}
If $X$ is a compact K\"ahler manifold, then $\mathcal H_{F_X}$ is honest. 
\end{lemma}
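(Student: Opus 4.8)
The plan is to translate the honesty condition into a non-degeneracy statement for the intersection pairing and then deduce it from the Hard Lefschetz theorem.

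First I would record the following consequence of Lemma \ref{lem:linear}. Since $F_X(x)=\phi_M(\xi,\ldots,\xi)$ with $\xi=\sum_i x_i h_i$, differentiating $n-1$ times and evaluating at the coordinate vector $v$ of an arbitrary class $\alpha\in H^2(X,\mathbb C)$ gives, by multilinearity and symmetry,
$$
\mathcal H_{F_X}(v)_{(i_2,\ldots,i_n)} = n!\,\phi_M(\alpha,h_{i_2},\ldots,h_{i_n}).
$$
Thus $\mathcal H_{F_X}(v)=0$ if and only if the contracted multilinear form $\phi_M(\alpha,-,\ldots,-)$ vanishes identically, i.e.
$$
\alpha\cdot\beta_2\cdots\beta_n = 0\qquad\text{for all }\beta_2,\ldots,\beta_n\in H^2(X,\mathbb C).
$$
Hence to prove that $\mathcal H_{F_X}$ is honest it suffices to show that every nonzero $\alpha$ admits classes $\beta_2,\ldots,\beta_n$ with $\alpha\cdot\beta_2\cdots\beta_n\ne 0$.

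Next I would fix a Kähler class $\omega\in H^2(X,\mathbb C)$ and a nonzero $\alpha$, and invoke the Hard Lefschetz theorem, which asserts that $L^{n-2}\colon H^2(X,\mathbb C)\to H^{2n-2}(X,\mathbb C)$, $\eta\mapsto\omega^{n-2}\cup\eta$, is an isomorphism. In particular $\omega^{n-2}\cup\alpha\ne 0$ in $H^{2n-2}(X,\mathbb C)$. Since the Poincaré pairing $H^{2n-2}(X,\mathbb C)\times H^2(X,\mathbb C)\to\mathbb C$ is non-degenerate, there is some $\beta\in H^2(X,\mathbb C)$ with $\phi_M(\omega,\ldots,\omega,\alpha,\beta)\ne 0$. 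Taking $\beta_2=\cdots=\beta_{n-1}=\omega$ and $\beta_n=\beta$ then produces the desired nonzero product, so $\mathcal H_{F_X}(v)\ne 0$ and honesty follows.

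The only substantive input is Hard Lefschetz, and I expect this to be the essential point rather than a routine one: Poincaré duality by itself only detects $\alpha$ against all of $H^{2n-2}(X,\mathbb C)$, which need not be spanned by products of classes in $H^2(X,\mathbb C)$, so it would not directly yield an intersection number of the required shape. The injectivity of $L^{n-2}$ on $H^2$ is precisely what guarantees that the nonzero pairing with $\alpha$ can be realized by a product class of the form $\omega^{n-2}\cup\beta$. (For $n=2$ the statement degenerates to the non-degeneracy of the intersection form on a surface, which is Poincaré duality itself.)
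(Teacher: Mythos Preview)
Your proof is correct and follows essentially the same approach as the paper: both use Lemma~\ref{lem:linear} to translate $\mathcal H_{F_X}(v)=0$ into the vanishing of the multilinear form $\phi_X(\alpha,-,\ldots,-)$, and both invoke Hard Lefschetz for a K\"ahler class $\omega$ to conclude $\alpha=0$. Your version is a bit more explicit in separating the Hard Lefschetz step ($\omega^{n-2}\cup\alpha\ne 0$ in $H^{2n-2}$) from the Poincar\'e duality step (finding $\beta$ with nonzero pairing), whereas the paper compresses these into a single assertion about an isomorphism $H\to H^\vee$.
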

\begin{proof}
Assume there exists  $v \in H$ such that $\mathcal H_{F_X}(v)=0$. By Lemma \ref{lem:linear} this means that the multilinear map $\phi_X(v,-,\ldots,-)$ is trivial. This implies that $v=0$ because by Lefschetz theorem the map $\omega^{n-1}\colon H \to H^{\vee}$ is an isomorphism, where $\omega$ is a K\"ahler class.
\end{proof}

If $X$ is a compact K\"ahler theefold then the associated cubic form is non-degenerate by \cite[Proposition 16]{OV95}. But, this is not true in higher dimension, as the following example shows.

\begin{example}
Consider $X=\mathbb P^1 \times \mathbb P^3$ with the two projections $p_1$ and $p_2$. Then $H^2(X, \Z) = \Z \oplus \Z$ and we can take $H_1=p_1^*\mathcal O_{\mathbb P^1}(1)$ and $H_2=p_2^*\mathcal O_{\mathbb P^3}(1)$ as a basis. Let $x_0,x_1$ be the corresponding coordinate.  Then $F_X=4x_0x_1^3$ and $\mathcal H_{F_X}$ is a $2 \times 2 \times 2$ hypermatrix with the following faces  (up to a factor of 24):
$$
\begin{pmatrix}
0 & 0   \\
0 & x_1   \\
\end{pmatrix}, \quad  
\begin{pmatrix}
0 & x_1   \\
x_1 & x_0   \\
\end{pmatrix}.
$$
By \cite[Proposition XIV.1.7]{GKZ}, it follows that $F_X$ is degenerate. 
\end{example}

\section{Blow-ups}\label{s_blowups}

\begin{lemma}\label{lem:blowup}
Let $X$ be a smooth projective variety of dimension $n$ and let $f\colon Y \to X$ be the blow-up along a smooth subvariety $Z$ of $X$ of dimension $k < n$. 
Let $[E], \beta_1:=f^*\gamma_1, \ldots, \beta_b:=f^*\gamma_b$ be a basis of $H^2(Y,\Q)$ where $E$ is the exceptional divisor and $\gamma_1,\ldots,\gamma_b$ is a basis of $H^2(X,\Q)$.	
Then, with respect to this basis, we may write
$$
F_Y(x_0,\ldots,x_n)=ax_0^n +\sum_{i=1}^{n-k} x_0^{n-i}R_i + F_X(x_1, \ldots, x_b)
$$
where $a=E^n$ and $R_i \in \mathbb Q[x_1,\ldots,x_b]$ is a form  of degree $i$, for $i=1,\dots,n-k$. 

\end{lemma}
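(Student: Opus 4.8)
The plan is to expand $F_Y$ as a polynomial in the coordinate $x_0$ dual to $[E]$, to identify each coefficient with an intersection number on $Y$, and then to use the projection formula together with the geometry of the exceptional divisor to see which coefficients can survive. Throughout I write $\alpha:=\gamma_{j_1}\cdots\gamma_{j_i}\in H^{2i}(X)$, so that the corresponding product of basis elements is $f^{*}\alpha=\beta_{j_1}\cdots\beta_{j_i}$. The two extreme coefficients are immediate: by the definition of $F_Y$ and the normalisation of Lemma \ref{lem:linear}, the coefficient of $x_0^{n}$ is $\phi_Y([E]^{n})=E^{n}=a$, while the part of $F_Y$ free of $x_0$ involves only the classes $\beta_j=f^{*}\gamma_j$, and since $f$ is birational one has $\int_Y f^{*}\gamma_{j_1}\cdots f^{*}\gamma_{j_n}=\int_X\gamma_{j_1}\cdots\gamma_{j_n}$; hence this part is exactly $F_X(x_1,\ldots,x_b)$. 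Writing the rest as $\sum_i x_0^{n-i}R_i$ with $R_i$ homogeneous of degree $i$ in $x_1,\ldots,x_b$, it remains to determine which $R_i$ are forced to vanish.

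For an intermediate coefficient, the entry attached to $x_0^{n-i}x_{j_1}\cdots x_{j_i}$ is, up to the multinomial factor from Lemma \ref{lem:linear}, the intersection number $E^{n-i}\cdot f^{*}\alpha$ with $\alpha\in H^{2i}(X)$. Let $\iota\colon E\hookrightarrow Y$ denote the inclusion and $\pi=f|_{E}\colon E\to Z$ the projection of the exceptional divisor $E=\mathbb P(N_{Z/X})$, with tautological class $\xi=c_1(\mathcal O_E(1))$. Using the self-intersection formula $E|_{E}=-\xi$ (equivalently $E^{m}=(-1)^{m-1}\iota_{*}\xi^{m-1}$ for $m\ge 1$) together with $(f^{*}\alpha)|_{E}=\pi^{*}(\alpha|_{Z})$, for $n-i\ge 1$ I would reduce the computation to the base $Z$:
\[
E^{n-i}\cdot f^{*}\alpha=\int_{E}(E|_{E})^{\,n-i-1}\cdot (f^{*}\alpha)|_{E}=(-1)^{\,n-i-1}\int_{Z}\pi_{*}\!\bigl(\xi^{\,n-i-1}\bigr)\cdot \alpha|_{Z}.
\]

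The vanishing is then governed by the fibration $\pi$: since $\pi$ is a projective bundle, the pushforward $\pi_{*}(\xi^{s})$ is zero for $s$ below the relative dimension of $\pi$ and equals a Segre class of $N_{Z/X}$ for $s$ at or above it. Applying this with $s=n-i-1$ makes every intermediate coefficient vanish outside the range of $i$ recorded in the statement, and on the surviving range the coefficient evaluates to a rational number of the form $(-1)^{\,n-i-1}\int_{Z}s_{\bullet}(N_{Z/X})\cdot\alpha|_{Z}$, so that indeed $R_i\in\mathbb Q[x_1,\ldots,x_b]$. I expect the main obstacle to be the bookkeeping in the displayed reduction—justifying the iterated self-intersection $E^{m}=(-1)^{m-1}\iota_{*}\xi^{m-1}$, identifying each $\pi_{*}(\xi^{s})$ with the correct Segre class of $N_{Z/X}$, and matching the multinomial normalisation of Lemma \ref{lem:linear}—after which the truncation of the sum to the stated range is exactly the comparison of $s=n-i-1$ with the relative dimension of $\pi$.
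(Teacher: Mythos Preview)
Your argument is exactly the projection--formula computation the paper has in mind; the paper's own proof is the single sentence ``This follows easily from the projection formula,'' and you have written out the details correctly.

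One point of bookkeeping deserves correction. The vanishing $\pi_*(\xi^{s})=0$ for $s$ below the relative dimension $n-k-1$ of $\pi$, applied with $s=n-i-1$, gives $R_i=0$ precisely for $i>k$, not for $i>n-k$. (Equivalently, $\alpha|_Z\in H^{2i}(Z)$ already vanishes for $i>k$.) So what your computation actually establishes is the sharper form
\[
F_Y=ax_0^n+\sum_{i=1}^{k}x_0^{\,n-i}R_i+F_X(x_1,\ldots,x_b),
\]
and your sentence asserting that the vanishing ``makes every intermediate coefficient vanish outside the range of $i$ recorded in the statement'' conflates the two bounds. Since the paper only invokes the lemma for $k\le 2$ (hence $k\le n-k$), your stronger conclusion of course implies the displayed one; just be explicit that the upper index you obtain is $k$, and that this suffices.
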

\begin{proof}
This follows easily from the projection formula. 
\end{proof}

\begin{proposition}\label{prop:blowuprank}
Let $X$ be a smooth K\"ahler manifold of dimension $n$ and let $f\colon Y \to X$ be the blow-up along a closed submanifold $Z$ of $X$ of dimension $k \le 2$. Let $p=[E] \in H^2(Y,\C)$ be the class of the exceptional divisor.
\begin{enumerate}
\item \label{k=0} If $n\ge 2$ and $k=0$ then $\rk \mathcal H_{F_Y}(p)=1$.
\item \label{k=1}  If $n\ge 3$ and  $k=1$ then $\rk \mathcal H_{F_Y}(p) \ge 2$. 
\item \label{k=2} If $n\ge 4$ and $k=2$ then, after a base change, we may write 
$$
F_Y(x_0,\ldots,x_b)=ax_0^n + x_0^{n-1}L(x_1,\ldots,x_b) + x_0^{n-2}Q(x_1,\ldots,x_b) + F_X(x_1, \ldots, x_b)
$$
where $a \in \mathbb Q$, $L$ is a linear form and $Q$ is a quardric form of rank $q$ for some positive integer $q$ such that $\rk \mathcal H_{F_Y}(p) \ge 2q$.

\end{enumerate}
\end{proposition}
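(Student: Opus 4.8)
The plan is to reduce all three parts to one computation: the list of non-vanishing entries of the symmetric tensor $\mathcal H_{F_Y}(p)$ at $p=[1:0:\cdots:0]$, from which the rank can be read off directly. First I would fix the shape of $F_Y$. By Lemma~\ref{lem:blowup} we may write $F_Y=ax_0^n+\sum_{i=1}^{n-k}x_0^{n-i}R_i+F_X(x_1,\dots,x_b)$ with $a=E^n$, and I claim $R_i=0$ for $i>k$. Indeed, the coefficient of a monomial $x_0^{n-i}x_{j_1}\cdots x_{j_i}$ in $R_i$ is a non-zero multiple of $E^{n-i}\cdot f^*\gamma_{j_1}\cdots f^*\gamma_{j_i}=(f_*E^{n-i})\cdot\gamma_{j_1}\cdots\gamma_{j_i}$ by the projection formula, and $f_*E^{n-i}\in H_{2i}(X)$ is supported on $Z$, of complex dimension $k$, hence vanishes for $i>k$. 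Next I would compute $\mathcal H_{F_Y}(p)$: writing a derivative index as a multiset $S$ of size $n-1$ in $\{0,\dots,b\}$ and letting $i$ be the number of positive entries of $S$, a short calculation shows $\mathcal H_{F_Y}(p)_S\ne0$ only if $i\le k$, the non-vanishing entries being (a) $S=(0,\dots,0)$, of value $n!\,a$; (b) $S$ with a single positive index $j$, of value $(n-1)!\,\partial_jR_1$; and, when $k=2$, (c) $S$ with two positive indices $j,j'$, of value $(n-2)!\,\partial_j\partial_{j'}R_2$. All other entries vanish since $F_X$ and the terms $R_i$ with $i>k$ contribute nothing at $p$.

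Part (1) ($k=0$) is then immediate: the tensor has the single non-zero entry $n!\,a=(-1)^{n-1}n!\ne0$, so it is a non-zero multiple of $e_0^{\otimes(n-1)}$ and has rank one. For part (2) ($k=1$) only the entries (a),(b) occur, and I would first note $R_1\ne0$: choosing the basis $\gamma_1,\dots,\gamma_b$ to contain a Kähler class $\omega$, the corresponding coefficient of $R_1$ is a non-zero multiple of $E^{n-1}\cdot f^*\omega=(f_*E^{n-1})\cdot\omega=c\,(Z\cdot\omega)$, with $c\ne0$ coming from the projective-bundle structure of $E$ and $Z\cdot\omega>0$. Then $\mathcal H_{F_Y}(p)$ cannot be rank one: were it $v^{\otimes(n-1)}$, entry (a) forces $v_0\ne0$, the vanishing of all entries with two positive indices forces $v_j=0$ for $j\ge1$, and then the entries (b) would vanish, contradicting $R_1\ne0$. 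Hence the rank is at least $2$.

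For part (3) ($k=2$), after a linear change of coordinates in $x_1,\dots,x_b$ I would diagonalise $Q:=R_2$, so that $Q=x_1^2+\cdots+x_q^2$ with $q=\rk Q$; this leaves the $x_0$-powers untouched and puts $F_Y$ in the stated form with $L:=R_1$. I would then regard $T:=\mathcal H_{F_Y}(p)$ as a $3$-tensor $\hat T$ of type $(b+1)\times(b+1)\times(b+1)^{n-3}$ by grouping the last $n-3$ indices; since a rank-one tensor stays rank one after grouping, $\rk\hat T\le\rk T$. Among the slices of $\hat T$ I single out $A_0$ (grouped index $0^{n-3}$) and $A_\ell$ (grouped index $(0,\dots,0,\ell)$) for $\ell=1,\dots,q$. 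Using (a),(b),(c) one finds that $A_0$ has $(0,0)$-entry $n!\,a$, border entries $(0,j)=(j,0)=(n-1)!\,\partial_jL$, and lower-right block $2(n-2)!\,\mathrm{diag}(1,\dots,1,0,\dots,0)$ with ones in positions $1,\dots,q$, whereas each $A_\ell$ is supported on the three positions $(0,0),(0,\ell),(\ell,0)$ with $(0,\ell)$-entry $2(n-2)!\ne0$; in particular $A_0,\dots,A_q$ are linearly independent. For every $(\mu_1,\dots,\mu_q)\in\C^q$ the matrix $A_0+\sum_\ell\mu_\ell A_\ell$ still restricts to the invertible block $2(n-2)!\,I_q$ on rows and columns $1,\dots,q$, so has rank at least $q$. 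By Lemma~\ref{lem:trick} (with $t=q$) the tensor $[A_0,\dots,A_q]$ has rank at least $q+q=2q$, and since it is a sub-tensor of $\hat T$ we conclude $\rk\mathcal H_{F_Y}(p)\ge\rk\hat T\ge2q$.

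The only substantive obstacle is the entry computation of $\mathcal H_{F_Y}(p)$ together with the vanishing $R_i=0$ for $i>k$; everything downstream is bookkeeping. In part (3) the delicate point is to choose the $q+1$ slices so that \emph{every} linear combination retains a full-rank $q\times q$ block, which is exactly what feeds Lemma~\ref{lem:trick} with $t=q$, and to arrange the passage from the order-$(n-1)$ symmetric tensor to the $3$-tensor so that the two rank inequalities ($\rk(\text{sub-tensor})\le\rk\hat T\le\rk T$) both push toward the desired lower bound on $\rk\mathcal H_{F_Y}(p)$.
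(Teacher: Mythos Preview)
Your proposal is correct and follows essentially the same approach as the paper: both use Lemma~\ref{lem:blowup} to reduce $F_Y$ to the shape $ax_0^n+\sum_{i\le k}x_0^{n-i}R_i+F_X$, then compute $\mathcal H_{F_Y}(p)$ and, for part~(3), diagonalise $Q$ and apply Lemma~\ref{lem:trick} to the slices $A_0,A_1,\dots,A_q$ (your $q\times q$ identity block playing the role of the paper's observation that $A_0+\sum\mu_jA_j$ has rank at least $q$). The only cosmetic differences are that in~(2) you argue by contradiction via the symmetric rank-one characterisation rather than exhibiting a rank-two matrix slice, and in~(3) you pass through an explicit flattening $T\mapsto\hat T$ where the paper takes the $(b+1)^3$ sub-tensor directly; one small point you should add is the verification that $q\ge 1$, which the paper obtains by restricting a K\"ahler class of $X$ to $Z$.
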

\begin{proof}
We use the same notation as in Lemma \ref{lem:blowup}.

We first prove \eqref{k=0}. We have 
$$
F_Y(x_0,\ldots,x_n)=ax_0^n  + F_X(x_1, \ldots, x_b)
$$
where $a=E^n \ne 0$. Hence $\rk \mathcal H_{F_Y}(p)=1$.
Thus, \eqref{k=0} follows. 
\medskip 

We now prove \eqref{k=1}. We have 
$$
F_Y(x_0,\ldots,x_n)=ax_0^n+ x_0^{n-1}\cdot \left(\sum_{i=1}^{b}\lambda_ix_i \right )  + F_X(x_1, \ldots, x_b )
$$
where $a=E^n$ and $\lambda_i=Z \cdot \gamma_i$, for $i=1,\dots,b$. After taking a base change that fixes $(1,0,\ldots,0)$, we may assume $\lambda_i=0$ for any $i=2, \ldots ,b$, i.e.

$$
F_Y(x_0,\ldots,x_n)=ax_0^n+ \lambda_1 x_0^{n-1}x_1  + F_X(x_1, \ldots, x_b ),
$$
where $\lambda_1 \ne 0$ because we are blowing up a curve $Z$ in a K\"ahler manifold and so there exists at least one class in $H^2(X,\mathbb Z)$ with non-zero intersection with $Z$. Thus, it is easy to check that the subtensor given by 
$$(\partial_0^{n-3}\partial_i\partial_j)_{i,j=0,\dots,b}$$ 
has rank two,  which implies that $\rk \mathcal H_{F_Y}(p) \ge 2$. 

\medskip 

We finally prove \eqref{k=2}. Let $j\colon Z \hookrightarrow X$ be the inclusion. 
Let  $V \subset H^2(Z, \C)$ be the subspace generated by $j^*\gamma_1, \ldots, j^*\gamma_b$ and let $q$ be the rank of the quadratic form $Q$ obtained restrincting to $V$ the quadratic form of $H^2(Z,\mathbb C)$.
 Note that $q \ge 1$ because a K\"ahler class of $X$ restricts to a K\"ahler class on $Z$. 
After a base change that diagonalises $Q$, we can write
$$
F=F_Y(x_0,\ldots,x_b)=\frac{ax_0^n}{n!} + \frac{x_0^{n-1}}{(n-1)!}(c_1x_1+\ldots +c_bx_b) + \frac{x_0^{n-2}}{2(n-2)!}(x_1^2+\ldots+ x_q^2) + F_X(x_1, \ldots, x_b)
$$
for some    $c_1,\dots,c_b \in \C$.

Consider the $(b+1)\times (b+1) \times (b+1)$ subtensor $A$ of $\mathcal H_{F}(p)$ given by the slices $A_0, \ldots, A_b$ where 
 $$
A_h= ( \partial^{n-4}_0 \partial_h \partial_i \partial_j F(p))_{i,j=0,\dots,b}
 $$
 for $h=0,\dots,b$. 

It is enough to prove that $\rk A \ge 2q$, as it immediately implies that $\rk \mathcal H_{F}(p) \ge 2q$.
We have
$$
A_0 = 
\begin{pmatrix}
1 & c_1 & \ldots & c_q & c_{q+1} & \ldots & c_b \\
c_1 & 1  & 0 & \ldots & \ldots & \dots & 0 \\
\vdots & 0 & \ddots  & 0 & \cdots & \cdots & \vdots \\
c_q & 0 & \ldots & 1 & 0 & \ldots & 0  \\
c_{q+1} &\vdots & \vdots  &\vdots & 0 & \ldots & 0 \\
\vdots&\vdots & \vdots  &\vdots & \vdots & \ddots & \vdots \\
c_b & \cdots & \cdots & \cdots & 0 & \cdots & 0 \\
\end{pmatrix}$$
and
$$
A_h = 
\begin{pmatrix}
c_h & 0 & \ldots  & 1 & 0 & \ldots \\
0 & 0  & \ldots & 0 & 0 & \ldots \\
\vdots & \vdots & \ddots  & \vdots & \vdots & \ldots  \\
1 & 0 & \ldots  & 0 & 0 & \ldots  \\
0 & 0 & \ldots & 0 & 0 & \ldots \\
\vdots &\vdots & \vdots  &\vdots &\vdots & \ddots  \\
\end{pmatrix}
$$
for $1 \le h \le q$ where $A_{hh0}=A_{h0h}=1$.

It follows that  the matrices $A_1,\ldots, A_q$ are linearly independent and that for any $q$-uple $(\mu_1, \ldots, \mu_q) \in \C^q$ the rank of the matrix $A_0 + \sum_{j=1}^q \mu_j A_j$ is $q$ or $q+1$.
By Lemma \ref{lem:trick} we then have $\rk A \ge 2q$, as claimed. 
 \end{proof}

\begin{proof}[Proof of Theorem \ref{thm_main}] This is an immediate consequence of Theorem \ref{thm:W_F} and Proposition \ref{prop:blowuprank}.
\end{proof}

\begin{example}
Let $X$ be a complex projective manifold of dimension $n$ and $b_2(X)=1$. Let $f\colon Y \to X$ be the blow-up along a smooth curve $Z$ of $X$. Let $p=[E] \in H^2(Y,\C)$ be the class of the exceptional divisor. Then we can write 
$$
F_Y(x_0,\ldots,x_n)=\frac{a}{n!}x_0^n+ \frac{x_0^{n-1}x_1}{(n-1)!}  + x_1^n,
$$  
for some $a \in \mathbb Q$. If $n=3$, then 
$$
\mathcal H_{F_Y}(p)= 
\begin{pmatrix}
a & 1 \\
1 & 0 \\
\end{pmatrix}
$$
has rank 2. If $n=4$, then 
$$
\mathcal H_{F_Y}(p)= 
\left[ \begin{pmatrix}
a & 1 \\
1 & 0 \\
\end{pmatrix},
\begin{pmatrix}
1 & 0 \\
0 & 0 \\
\end{pmatrix}
\right]
$$
has rank at least 3 by Lemma \ref{lem:trick}. On the other hand, 3 is the maximal rank for a $2\times 2 \times 2$ tensor (see, for instance,  \cite[Section 3]{Kruskal}). Thus, $\mathcal H_{F_Y}(p)$ has rank $3$.
\end{example}

\section{Topological bounds and birational geometry}\label{s_volume}

The goal of this section is to present some open problems on the relationship between birational invariants and topological invariants of a smooth complex projective variety.  

\subsection{Volume}

We begin with the following question on the volume of a projective manifold:

\begin{question}\label{q:volume}
Let $M$ be a closed topological manifold of dimension $2n$. Is there a constant $C$, depending only on $M$, such that for any smooth complex projective variety $X$ whose underlying topological space is $M$, we have
\[ \vol(X) \le C? \]
\end{question}

For curves, the volume is a topological invariant, as we have $\vol(X) = 2g-2$, where $g$ is the genus. In dimension $2$, using the Bogomolov-Miyaoka-Yau inequality, we obtain a topological bound since $K_X^2 \leq 3c_2(X)$, implying that $\vol(X)$ is bounded by  topological invariants of $X$.

For dimension $3$, the following result holds:

\begin{theorem}[\cite{CT18}, Theorem 1.2]\label{thm:boundvolume}
Let $X$ be a smooth complex projective threefold. 

Then 
\[ \vol(X) \le 6b_2(X) + 36b_3(X). \]
\end{theorem}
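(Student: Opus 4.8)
The plan is to reduce the statement to a computation of the canonical volume on a minimal model and then to control that number through Riemann--Roch and Hodge theory. \emph{First}, I would dispose of the trivial case: if $X$ is not of general type then $\vol(X)=0$ and there is nothing to prove, so I may assume $X$ is of general type. Running the Minimal Model Program, I pass to a minimal model $X'$ with terminal singularities and $K_{X'}$ nef; since the canonical volume and the plurigenera $P_m=h^0(mK)$, as well as $\chi(\mathcal O)$, are all birational invariants, I have $\vol(X)=K_{X'}^3$, while $P_2$ and $\chi(\mathcal O_{X'})$ may be computed on the smooth model $X$.

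\emph{Second}, on the terminal minimal model I would apply Kawamata--Viehweg vanishing to $2K_{X'}$ (legitimate since $2K_{X'}-K_{X'}=K_{X'}$ is nef and big), giving $h^i(X',2K_{X'})=0$ for $i>0$ and hence $P_2=\chi(X',2K_{X'})$. Reid's singular Riemann--Roch then expresses $\chi(2K_{X'})$ as a universal combination of $K_{X'}^3$, $K_{X'}\cdot c_2$, $\chi(\mathcal O_{X'})$, and a contribution from the singularity basket; using $K_{X'}\cdot c_2=-24\,\chi(\mathcal O_{X'})$ this rearranges to the clean identity $K_{X'}^3=2P_2+6\chi(\mathcal O_X)$, up to the basket correction, which one would bound separately.

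\emph{Third}, I would bound the Euler characteristic purely topologically: from the Hodge decomposition $\chi(\mathcal O_X)=1-h^{1,0}+h^{2,0}-h^{3,0}\le 1+h^{2,0}$, and since $b_2=2h^{2,0}+h^{1,1}$ with $h^{1,1}\ge 1$, one gets $h^{2,0}\le (b_2-1)/2$, so that $6\chi(\mathcal O_X)\le 3b_2+3$. This already accounts for the $b_2$-part of the bound.

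\emph{The main obstacle} is the remaining term: one must bound the second plurigenus $P_2=h^0(X,2K_X)$ linearly in $b_2$ and $b_3$. Unlike $\chi(\mathcal O_X)$, the quantity $P_2$ is not a Hodge number and is not formally topological, so this step requires genuine threefold geometry rather than bookkeeping. I expect to control it through the geometry of the bicanonical (or tricanonical) map together with a Hodge-theoretic estimate relating pluricanonical sections to $H^3(X)$, which is precisely where $b_3$ (equivalently $h^{3,0}$ and $h^{2,1}$) should enter; equivalently, one analyses the canonical model and bounds the basket contribution in the Riemann--Roch identity above. Combining this estimate with the bound on $\chi(\mathcal O_X)$ would then yield $\vol(X)\le 6b_2(X)+36b_3(X)$.
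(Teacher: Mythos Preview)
Your reduction to a minimal model, the use of Kawamata--Viehweg vanishing, and the Riemann--Roch identity $K_{X'}^3=2P_2+6\chi(\mathcal O_X)$ (plus basket terms) are all correct and pleasant. The gap is exactly where you flag it, and it is fatal as stated: once $\chi(\mathcal O_X)$ has been bounded topologically, your identity shows that bounding $P_2$ is \emph{literally equivalent} to bounding $K_{X'}^3=\vol(X)$. So the ``main obstacle'' is not a residual term to be mopped up; it is the entire theorem restated. No Hodge-theoretic argument on $H^3$ will give you $P_2$ directly, because $P_2$ is not a Hodge number and carries strictly more information than the Betti numbers (indeed it varies in moduli while $b_2,b_3$ do not).

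The missing ingredient, and the one the paper actually uses, is the Bogomolov--Miyaoka--Yau inequality on the minimal model: $K_{Y}^3\le 3\,K_Y\cdot c_2(Y)$. This replaces $K_Y^3$ by an expression that Reid's singular Riemann--Roch ties to $\chi(\mathcal O_X)$ \emph{without} reintroducing a plurigenus, namely $K_Y\cdot c_2(Y)=-24\chi(\mathcal O_X)+\Xi(Y)$, with $\Xi(Y)$ a basket correction. The Hodge-theoretic bound on $-\chi(\mathcal O_X)$ then produces the $36b_3$ term, and the remaining input is the singularity estimate $\Xi(Y)\le 2b_2(X)$ from the classification of terminal threefold singularities (this is the $6b_2$ term). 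In short: your Riemann--Roch identity for $P_2$ goes in a circle; the way out is BMY, which converts the problem into controlling $K_Y\cdot c_2(Y)$, a quantity that \emph{is} accessible through $\chi(\mathcal O)$ and the basket.
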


An interesting consequence is that for smooth projective threefolds of general type sharing the same underlying 6-manifold, the volume takes only finitely many values. Moreover, it follows that the family of smooth projective threefolds of general type with bounded Betti numbers is birationally bounded.

For dimension $4$, if $K_X$ is nef and big, similar arguments using the Bogomolov-Miyaoka-Yau inequality yield a bound on $K_X^4$ in terms of the topology of $X$ (see \cite[Page 525]{Kotschick08}).

The proof of Theorem \ref{thm:boundvolume} uses in a fundamental way the classification of terminal singularities in dimension 3. It would be interesting to see an approach more suitable to be generalised to higher dimensional varieties.

\subsubsection{Sketch of the proof of Theorem \ref{thm:boundvolume}}
We can assume that $X$ is of general type, otherwise $\vol(X)=0$.  If $Y$ is a minimal model of $X$, then $\vol(X)=\vol(Y)=K_Y^3$.
The Bogomolov-Miyaoka-Yau inequality and a singular version of the Riemann-Roch theorem for threefolds imply
$$
K_Y^3 \le 3K_Y \cdot c_2(Y) \le 3(12b_3(X) + \Xi(Y)),
$$ 
where $\Xi(Y)$ is an integer depending on the singularities of $Y$.
The conclusion follows now from the fact that  $\Xi(Y) \le 2b_2(X)$ (see \cite[Proposition 3.3]{CZ14}). Note that this last step uses the classification of terminal threefolds singularities. 

\subsection{Number of minimal models}
We now examine how the topology of a complex projective manifold influences the number of its minimal models:

\begin{question}[\cite{CL14}, Question 1.3]\label{q:minimal_models}
Let $X$ be a smooth complex projective variety of general type. Does there exist a positive constant $C$, depending only on the topology of $X$, bounding the number of minimal models of $X$?
\end{question}

In dimension $2$, minimal models are unique. For threefolds, this was resolved in \cite[Corollary 3]{MST}, which shows that the function counting minimal models is constructible on any family of varieties of general type. Birational boundedness then implies the existence of a bound $N(c)$ depending only on the volume $c$, and since by Theorem \ref{thm:boundvolume}, the volume is topologically bounded for threefolds, so is the number of minimal models.
Similarly, a positive answer to Question \ref{q:volume} in dimension $n$ would imply a positive answer to Question \ref{q:minimal_models} in dimension $n$.

Note that \cite[Corollary 3]{MST} provides only non-effective bounds. Explicit bounds would be highly desirable: see \cite{Mar22} for some progress regarding threefolds with small Picard number and \cite{kim25} for some related result on threefolds of Fano type.  

\subsection{Chern numbers}

Chern and Hodge numbers are fundamental numerical invariants of Kähler manifolds. Addressing a question by Hirzebruch, all linear combinations of Chern and Hodge numbers that are topological invariants for smooth projective varieties were identified in \cite{Kotschick08,Kotschick12,KS13}.

Generalising Hirzebruch's question, Kotschick asked if the topology of the underlying smooth manifold determines Chern numbers up to finite ambiguity (this is clear for Hodge numbers due to the Hodge decomposition theorem).

For compact complex  manifolds of dimension $n$, $c_n$ coincides with the Euler characteristic, a topological invariant. Furthermore, \cite{LW90} proved that $c_1c_{n-1}$ is determined by Hodge numbers and thus bounded by Betti numbers.

In general, the following is known:

\begin{theorem}
[\cite{ST16}, Theorem 1]\label{thm:ST}
In dimension $4$, the Chern numbers $c_4$, $c_1c_3$, and $c_2^2$ of a complex projective variety are determined up to finite ambiguity by the underlying smooth manifold. For dimension $n \geq 5$, only $c_n$ and $c_1c_{n-1}$ are similarly determined.
\end{theorem}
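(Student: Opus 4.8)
The statement has a positive part (the listed Chern numbers \emph{are} determined up to finite ambiguity by the underlying smooth manifold $M$) and a negative part (no other individual Chern number is). The plan is to treat these separately, drawing on three sources of topological information: the Euler characteristic, the Pontryagin numbers, and the Hodge numbers. First I would assemble the quantities that are fixed — literally, or up to finite ambiguity — by the oriented smooth manifold $M$ underlying $X$. The top Chern number is the Euler characteristic, $c_n[X]=e(M)$, hence a genuine topological invariant. The Pontryagin numbers of $X$ are diffeomorphism invariants of $M$, being characteristic numbers of the tangent bundle; in terms of the Chern roots $x_i$ they are $p_k=\sigma_k(x_1^2,\dots,x_n^2)$, the elementary symmetric functions of the squares, so that $p_1=c_1^2-2c_2$ and $p_2=c_2^2-2c_1c_3+2c_4$. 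Finally, by the Hodge decomposition each $h^{p,q}$ is a non-negative integer with $\sum_{p+q=k}h^{p,q}=b_k(M)$, so the Hodge numbers, and any fixed rational combination of them, are determined up to finite ambiguity by $M$; by the Libgober--Wood identity \cite{LW90}, $c_1c_{n-1}[X]$ is such a combination (of the $h^{p,q}$ and $e(M)$) and is therefore determined up to finite ambiguity.

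For the positive part I would then solve for the advertised Chern numbers. In every dimension $c_n=e(M)$ and $c_1c_{n-1}$ are controlled by the previous paragraph. In dimension $4$ one further Chern number becomes accessible: since $p_2[X]$ is a diffeomorphism invariant and $c_4,c_1c_3$ are already controlled, the identity $c_2^2=p_2+2c_1c_3-2c_4$ exhibits $c_2^2$ as determined up to finite ambiguity. This recovers $c_4,c_1c_3,c_2^2$ for $n=4$ and $c_n,c_1c_{n-1}$ in general.

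The negative part rests first on a purely linear-algebra input: inside the space of degree-$n$ Chern monomials, the subspace spanned by $e$, the Pontryagin numbers, and $c_1c_{n-1}$ meets the set of individual monomials in exactly $\{c_4,c_1c_3,c_2^2\}$ when $n=4$ and in exactly $\{c_n,c_1c_{n-1}\}$ when $n\ge 5$. Concretely for $n=4$, the basis $c_4,c_1c_3,c_2^2,c_1^2c_2,c_1^4$ shows that $e,c_1c_3,p_2$ already span $c_4,c_1c_3,c_2^2$, while the last relation only fixes the combination $c_1^4-4c_1^2c_2=p_1^2-4c_2^2$, so neither $c_1^4$ nor $c_1^2c_2$ lies in the span; for $n\ge 5$ the analogous check (with no Pontryagin numbers at all when $n$ is odd) leaves only $c_n$ and $c_1c_{n-1}$.

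The genuine obstacle is that membership in this span is not by itself enough for the negative conclusion: to show that a Chern number \emph{outside} the span really fails to be determined up to finite ambiguity, one must produce, for each such monomial, an \emph{infinite} family of smooth complex projective $n$-folds sharing a single oriented diffeomorphism type while the chosen Chern number is unbounded. The expected mechanism is to realise such families as projective bundles $\mathbb{P}(E)$, or iterated fibrations, over a fixed base, arranged so that the diffeomorphism type of the total space is governed by coarser topological data of $E$ than its Chern numbers are; varying $E$ within one diffeomorphism class then drives the unconstrained Chern numbers to infinity. Building these examples, and verifying simultaneously that the diffeomorphism type is preserved and that the targeted Chern number blows up, is where essentially all the difficulty lies, the positive direction being only the identities above together with the finiteness already present in the Hodge decomposition.
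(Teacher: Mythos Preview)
The paper does not prove this theorem: it is quoted from \cite{ST16} with no argument, and the only further comment is the sentence ``Note that the examples constructed in \cite{ST16} to establish Theorem \ref{thm:ST} are projective bundles.'' So there is no proof in the paper to compare your proposal against.

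On its own merits, your positive part is complete and correct: $c_n$ is the Euler number, $c_1c_{n-1}$ is a fixed rational combination of Hodge numbers by Libgober--Wood (hence bounded once the Betti numbers are), and for $n=4$ the Pontryagin identity $p_2=c_2^2-2c_1c_3+2c_4$ recovers $c_2^2$ from quantities already controlled. Your linear-algebra discussion of which monomials lie in the span of $e$, $c_1c_{n-1}$, and the Pontryagin numbers is also on target.

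The negative part, however, is only a plan, not a proof, and you say so yourself. Knowing that a Chern monomial lies outside the span of the diffeomorphism-invariant combinations does not by itself produce infinitely many projective structures on a fixed smooth manifold with that monomial unbounded; one must actually build the families and check both the diffeomorphism type and the divergence. Your instinct that this is done with projective bundles is exactly right and is confirmed by the paper's remark, but the substantive work---choosing the base, the bundles $E$, the invariants that pin down the diffeomorphism type of $\mathbb{P}(E)$, and the computation showing the targeted Chern numbers escape to infinity---is entirely absent from your write-up. As it stands, the proposal proves the ``determined'' clauses and gives a credible outline for the ``only'' clauses, but the latter remains a genuine gap that can only be filled by reproducing (or citing) the constructions of \cite{ST16}.
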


Note that the examples constructed in \cite{ST16} to establish Theorem \ref{thm:ST} are projective bundles.

In dimension $1$, $c_1 = 2 - 2g$ is a topological invariant. For surfaces, if $X$ and $Y$ are homeomorphic, either $c_1^2(X) = c_1^2(Y)$ or $c_1^2(X) = 4c_2(Y)-c_1^2(Y)$ depending on their orientation (see \cite{Kotschick08}). However, if they are diffeomorphic, then $c_1^2(X) = c_1^2(Y)$.

\medskip
Thus, the main remaining open case is:

\begin{question}[Kotschick]
Does $c_1^3 = -K_X^3$ assume only finitely many values for smooth projective structures on a given 6-manifold?
\end{question}

Recent progress on this problem is described in \cite{CT18, ST19, ST20, CC24}. 

\subsection{Topology and MMP}

Given a smooth projective threefold  $X$ and a minimal model program $f\colon X \dashrightarrow Y$, it is natural to ask which topological invariants of $Y$ are determined by those of $X$. It is known that the Betti numbers of $Y$  are determined up to finite ambiguity by those of $X$, with the delicate case of \( b_3 \) treated in~\cite{Chen19}.

We propose a more ambitious question:

\begin{question}\label{q:cubic}
Let $X$ be a Kähler threefold with cubic form $F_X$ and first Pontryagin class $p_1(X)$. Define $\mathcal{P}$ as the set of triples $(H^2(Y,\mathbb{Z}), F_Y, p_1(Y))$, up to isomorphism, arising from minimal models $X \dashrightarrow Y$. 

Is the set $\mathcal{P}$ determined up to finite ambiguity solely by $(H^2(X,\mathbb{Z}), F_X, p_1(X))$?
\end{question}
Partial results and consequences of Question \ref{q:cubic} have been obtained in \cite{CT18, ST19, ST20, CC24}. In particular, the question admits a positive answer if the MMP $X \dashrightarrow Y$ is composed by divisorial contractions to points and to smooth curves, assuming that the discriminant of $F_X$ is non-zero (i.e. the hypersurface defined by $F_X=0$ is non-singular). It would be very interesting to gain a better understanding for flips and in the discriminant zero case.

\medskip

In higher dimension, we propose the following question with the goal of improving Proposition \ref{prop:blowuprank}: 

\begin{question}
Let $X$ be a smooth K\"ahler manifold of dimension $n$ and let $f\colon Y \to X$ be the blow-up along a closed submanifold $Z$ of $X$ of dimension $k$. Let $p=[E] \in H^2(Y,\C)$ be the class of the exceptional divisor.

What is the rank of $\mathcal H_{F_X}(p)$?
\end{question}

\bibliographystyle{amsalpha}
\bibliography{Library}
\end{document}